\newtheorem{thm}{Theorem}[section]
\newtheorem{lem}[thm]{Lemma}
\newtheorem*{lem*}{Lemma}
\newtheorem{prop}[thm]{Proposition}
\theoremstyle{remark}
\newtheorem{remark}[thm]{Remark}
\theoremstyle{definition}
\newtheorem{defn}[thm]{Definition}
\newcounter{my_enumerate_counter}
\newcommand\comment[1]{}
\newcommand\PSL{\operatorname{PSL}}
\newcommand\Tcal{\mathcal{T}}
\newcommand\Nbb{\mathbb{N}}
\newcommand\Qbb{\mathbb{Q}}
\newcommand\Rbb{\mathbb{R}}
\newcommand\Zbb{\mathbb{Z}}
\newcommand{\seq}[1]{\mathtt{#1}}
\newcommand\cat{{}^\smallfrown}
\newcommand{\triord}{\triangleleft}
\newcommand{\Th}{{}^{\textrm{th}}}
\newcommand\lex{\mathrm{lex}}
\newcommand{\Seq}[1]{\langle #1 \rangle}
\renewcommand{\epsilon}{\varepsilon}
\title[von Neumann-Day problem]
{A finitely presented group of piecewise projective homeomorphisms}
\keywords{amenable, finitely presented, free group, piecewise, projective, Thompson's group, torsion free}
\subjclass[2010]{Primary: 43A07; Secondary: 20F05}
\thanks{
This research was supported in part by
NSF grant DMS--1262019.
The second author would like to thank Alain Valette for a number of helpful conversations
concerning Monod's example in \cite{Monod} which took place while the author
was visiting  Universit\'e de Neuch\^{a}tel in January of 2013.}
\author{Yash Lodha}
\author{Justin Tatch Moore}
\address{Department of Mathematics \\ Cornell University\\
Ithaca, NY 14853--4201 \\ USA}
\email{{\tt yl763@cornell.edu}}
\dedicatory{This paper is dedicated to the memory of William Thurston (1946--2012).} 
\begin{document}

\begin{abstract}
In this article we will describe a finitely presented subgroup of Monod's group
of piecewise projective homeomorphisms of $\Rbb$.
This in particular provides a new example of a finitely presented group which is
nonamenable and yet does not contain a nonabelian free subgroup.
It is in fact the first such example which is torsion free.
We will also develop a means for representing the elements of the group by
labeled tree diagrams in a manner which closely parallels Richard Thompson's group $F$.
\end{abstract}

\maketitle

\section{Introduction}

The notion of an \emph{amenable group} was introduced by von Neumann
as an abstract means for preventing the existence
of paradoxical decompositions of the group:
a discrete group is \emph{amenable}
if it admits a finitely additive translation invariant probability measure.
At the heart of Banach and Tarski's paradoxical decomposition of the sphere
is the existence of a paradoxical decomposition of the free group
on two generators.
Since subgroups of amenable groups are easily seen to be amenable, it is natural to
ask whether every nonamenable group contains a free group on two generators.
Day was the first to pose this problem in print \cite{Day}, where he
attributed it to John von Neumann.

In 1980, Ol'shanskii solved the von Neumann-Day problem by producing a counterexample \cite{Olsh}.
Soon after, Adyan showed that certain Burnside groups are also counterexamples \cite{Adyan1}\cite{Adyan2}.
These examples are not finitely presented and the restriction of the
von Neumann-Day problem to the class of finitely presented groups remained open
until Ol'shanskii and Sapir constructed an example in 2003 \cite{OlshSap}.
Shortly after, Ivanov published another finitely presented counterexample \cite{Ivanov},
that is somewhat simpler but similar in spirit to the Ol'shanskii-Sapir example.
Both examples were produced by elaborate inductive constructions and are difficult to analyze.
It is also interesting to note that both of these examples are based on the constructions
of nonamenable torsion groups (they are torsion-by-cyclic)
and in particular are far from being torsion free.



In his recent article \cite{Monod}, Monod produced a new family of counterexamples 
to the von Neumann-Day problem.
They are all subgroups of the group $H$ consisting of all piecewise
projective transformations of the real projective line which fix the point
at infinity.
Monod demonstrated that $H$ does not contain nonabelian free subgroups
by adapting the method of Brin and Squier \cite{BrinSq}.

In this article, we will isolate a
finitely presented nonamenable subgroup of Monod's group $H$.
To our knowledge, this provides the first finitely presented
torsion free counterexample to the von Neumann-Day problem.
Moreover, our presentations for this group are very explicit:
it has a presentation with 3 generators and 9 relations as well as a natural
infinite presentation.
The group is generated by $a(t) = t+1$ together with the following
two homeomorphisms of $\Rbb$:
\[
b(t)=
\begin{cases}
 t&\text{ if }t\leq 0\\
 \frac{t}{1-t}&\text{ if }0\leq t\leq \frac{1}{2}\\
 3-\frac{1}{t}&\text{ if }\frac{1}{2}\leq t\leq 1\\
 t+1&\text{ if }1\leq t\\
\end{cases}
\qquad
c(t)=
\begin{cases}
 \frac{2t}{1+t}&\text{ if }0\leq t\leq 1\\
t&\text{ otherwise}\\
\end{cases}
\]
The main result of this paper is the following.
\begin{thm} \label{main}
The group $G_0$ generated by the functions $a(t)$, $b(t)$, and $c(t)$
is nonamenable and finitely presented.
\end{thm}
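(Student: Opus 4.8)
The plan is to prove the two assertions by separate routes, both organized around the faithful action of $G_0$ on $\Rbb$ by piecewise projective homeomorphisms. The feature I would lean on is that the generators act in a \emph{scale-coherent} way: $a$ is a pure translation, while $b$ and $c$ are (up to the eventual translation in $b$) supported on $[0,\infty)$ and assembled from genuine $\PSL_2$-pieces, so they expand intervals strictly more than the affine maps underlying Thompson's group $F$. The excerpt already records Monod's theorem that $H$, and hence $G_0$, contains no nonabelian free subgroup, so nonamenability cannot be witnessed by a free subgroup and must be obtained combinatorially.

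For nonamenability I would follow Monod's strategy and produce a paradoxical decomposition. Fix a point $x_0 \in \Rbb$ lying in the region where $b$ and $c$ act projectively, and let $X = G_0 \cdot x_0$ be its orbit. By Tarski's theorem it suffices to exhibit a partition of $X$ into finitely many pieces together with finitely many elements of $G_0$ that reassemble these pieces into two disjoint copies of $X$; the strict projective expansion of $b$ and $c$ supplies exactly the extra room needed to double the orbit, which is unavailable to the affine maps of $F$. Granting such a decomposition, $X$ admits no $G_0$-invariant finitely additive probability measure. On the other hand, if $G_0$ were amenable then the transitive $G_0$-set $X$ would inherit a $G_0$-invariant mean by pulling back an invariant mean on $\ell^\infty(G_0)$ along the quotient map. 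This contradiction shows $G_0$ is nonamenable.

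For finite presentability I would first develop a normal form for elements of $G_0$ in terms of labeled tree diagrams, in close analogy with the description of $F$ by pairs of finite binary trees, the labels recording which projective move ($b$- or $c$-type) is applied at each leaf. Reading off this calculus, I would write down a natural infinite presentation: a family of generators indexed by $\Zbb$, namely the integer translates $a^{-n} b a^{n}$ and $a^{-n} c a^{n}$ of the basic moves, relations declaring that generators of disjoint support commute, and relations recording the self-similar manner in which conjugation by $a$ shifts scale, in direct parallel with the relations $x_j^{x_i} = x_{j+1}$ of $F$. One then checks that this presentation is correct, namely that the relations rewrite every word to a unique normal form and that distinct normal forms act differently on $\Rbb$. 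A Tietze reduction finally collapses the data to finite size: every generator is a conjugate of one of finitely many basic moves by a power of $a$, and each scale-$n$ relation follows from its scale-$0$ instance together with the conjugation relations, leaving the asserted $3$ generators and $9$ relations.

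The step I expect to be the main obstacle is the completeness of the relations in the presentation: proving not merely that the listed relations hold in $G_0$, but that they generate \emph{every} relation, equivalently that the tree-diagram rewriting system is confluent. Faithfulness of the action on $\Rbb$ gives the easy inequality that distinct normal forms represent distinct elements, so the work concentrates on the reverse direction of controlling how the projective pieces compose and overlap when two diagrams are stacked. It is precisely here that the non-affine nature of $b$ and $c$ makes the bookkeeping heavier than for Thompson's $F$, and I would expect the bulk of the argument to be devoted to taming these overlaps.
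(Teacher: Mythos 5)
Your nonamenability argument has a genuine gap, and it also mislabels the method it invokes. Monod's proof (and this paper's adaptation of it) is \emph{not} a paradoxical-decomposition argument: it is measure-theoretic, resting on two black boxes --- Carri\'ere--Ghys (a countable \emph{dense} subgroup of $\PSL_2(\Rbb)$ induces a nonamenable orbit equivalence relation on the projective line with respect to Lebesgue measure) and Zimmer/Connes--Feldman--Weiss (a countable \emph{amenable} group always induces an amenable orbit equivalence relation). The paper then verifies by explicit computation that $G_0$ has the same orbits on $\Rbb \setminus \Qbb$ as the dense group $K = \langle t+1,\ 2t,\ -1/t \rangle$ (for instance $bca^{-1}c^{-1}a$ agrees with $t \mapsto 2t$ on $[0,1]$, while $aba$ and $ba^{-3}$ give $-1/t$ on $[-1,-1/2]$ and $[1/2,1]$), and nonamenability follows by combining the two cited theorems. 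By contrast, you propose to ``exhibit'' a paradoxical decomposition of an orbit, justified only by the phrase that projective expansion ``supplies exactly the extra room needed to double the orbit.'' That sentence is the entire content of the claim: no pieces, no group elements, no construction. Since $G_0$ contains no nonabelian free subgroup, the standard mechanism for producing paradoxical decompositions is unavailable, and it is precisely because no explicit decomposition is known that the existing proofs go through measure theory. Worse, your route requires proving something formally stronger than nonamenability: a nonamenable group can act transitively with an invariant mean, so paradoxicality of a specific orbit does not follow from nonamenability of $G_0$, and the almost-everywhere statement furnished by Carri\'ere--Ghys does not hand you a particular paradoxical orbit. (The Tarski/pushforward logic in your final step is correct; what is missing is the input to it.)

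The finite-presentation half matches the paper's skeleton --- an infinite presentation, normal forms and labeled tree diagrams, then a Tietze-style collapse using conjugation --- but two essential ingredients are absent. First, your proposed infinite presentation is too small: the paper's generators are $x_s$, $y_s$ indexed by \emph{all} finite binary sequences $s$, i.e.\ conjugates of the basic moves by arbitrary elements of $F$, not merely by powers of $a$; and besides the disjoint-support commutations and $F$-conjugation relations it requires the replication relation $y_s = x_s\, y_{s\seq{0}}\, y_{s\seq{10}}^{-1}\, y_{s\seq{11}}$, which is neither a commutation nor a conjugation by $a$. This relation (the ninth relation $c = b^2a^{-1}b^{-1}acb^{-2}ab^{-1}c^{-1}ba^{-1}bab^{-1}ab^{-1}cba^{-1}ba^{-1}$ in the finite list) is the engine of every rewriting step, and your description of the relations omits it. Second, you correctly identify that the crux is showing the listed relations generate all relations, but you offer no method for doing so; the paper's Section \ref{sufficient} is exactly this argument, via standard forms, ``sufficiently expanded'' forms decreasing along a well-founded order, and the combinatorial lemmas on advancing occurrences of $y^{\pm}$ through binary strings, culminating in the fact that a sufficiently expanded standard form which is not an $X$-word sends some point to a non-tail-equivalent point and hence cannot even represent an element of $F$. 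Note also that the paper's standard forms are \emph{not} unique, so ``the relations rewrite every word to a unique normal form'' is not the shape the actual argument takes: instead one shows any word representing the identity can be rewritten to an $X$-word and then invokes the known presentation of $F$. As it stands, both halves of your proposal assert rather than prove their key steps.
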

Since it is a subgroup of $H$, $G_0$ does not contain a nonabelian free subgroup.
We claim no originality in our proof that $G_0$ is nonamenable; this is a routine modification
of the methods of \cite{Monod} which in turn relies on \cite{CarGhys} \cite{Connes}.
 
It is interesting to note that, by an unpublished result of Thurston, $a(t)$ and
$b(t)$ generate the subgroup $P(\Zbb) \leq H$, consisting of those homeomorphisms which
are $C^1$ and piecewise $\PSL_2(\Zbb)$, which he moreover showed is
a copy of Richard Thompson's group $F$.
In fact the methods of \cite{Monod} easily show that 
$t \mapsto t + 1/2$ and $b(t)$ generate a nonamenable group, although at present it is
unclear whether this group is finitely presented.
While there are strong parallels between the group $\langle a,b,c\rangle$ and $F$,
neither \cite{Monod} nor the present article seems to shed any light on whether $F$ is
amenable.

The paper is organized as follows.
Section \ref{prelim} contains a review of some of the preliminaries we will need later 
in the paper.
Both an infinite and a finite presentation are described in Section \ref{presentation}
and it is demonstrated there how to prove that the finite presentation generates the infinite presentation.
Tree diagrams for elements of the group are developed in Section \ref{tree_diag}.
Finally, Section \ref{sufficient} contains a proof that the relations isolated in Section \ref{presentation}
suffice to give a presentation for $\Seq{a,b,c}$.

\section{Preliminaries}

\label{prelim}
Our analysis of the group $G_0$ will closely parallel the
now well-established analysis of Thompson's group $F$.
We direct the reader to the standard reference \cite{Belk} for
the properties of $F$; additional information can be found in \cite{Cannon}.
We shall mostly follow the notation and conventions
of \cite{Belk} \cite{BrinSq}.

We will take $\mathbb{N}$ to include $0$; in particular all
counting will start at $0$.
Let $2^\Nbb$ denote the collection of all infinite binary sequences and
let $2^{<\Nbb}$ denote the collection of all finite binary sequences.
If $i \in \Nbb$ and $u$ is a binary sequence of length at least $i$, we will let 
$u\restriction i$ denote the initial part of $u$ of length $i$.
If $s$ and $t$ are finite binary sequences, then we will write $s \subseteq t$
if $s$ is an initial segment of $t$ and $s \subset t$ if $s$ is a proper
initial segment of $t$.
If neither $s \subseteq t$ nor $t \subseteq s$, then we will say that
$s$ and $t$ are \emph{incompatible}.
The set $2^{<\Nbb}$ is equipped with a lexicographic order defined
by $s <_\lex t$ if $t \subset s$ or $s$ and $t$ are incompatible
and $s(i) < t(i)$ where $i$ is minimal such that $s(i) \ne t(i)$.

If $s$ and $t$ are two sequences ($s$ is finite but $t$ may be infinite),
then $s \cat t$ will be used to denote the concatenation of $s$ and $t$.
In some circumstances, $\cat$ will be omitted;
for instance we will often write $s \seq{01}$ instead of $s \cat \seq{01}$.
If $\xi$ and $\eta$ are infinite sequences, then we will say that
$\xi$ and $\eta$ are \emph{tail equivalent} if there are $s$, $t$, and $\zeta$ such that
$\xi = s \cat \zeta$ and $\eta = t \cat \zeta$.
Given an infinite sequence $s$, $\tilde{s}$ is the sequence 
satisfying $\tilde{s}(i)=\seq{0}$ if $s(i)=\seq{1}$ and $\tilde{s}(i)=\seq{1}$ if
$s(i)=\seq{0}$.
The constant sequences $\seq{000}...,\seq{111}...$ are denoted by $\seq{\bar 0},\seq{\bar 1}$
respectively.

Let $\Tcal$ denote the collection of all finite rooted ordered binary trees. 
More concretely, we view elements $T$ of $\Tcal$ as \emph{prefix} sets --- those
sets $T$ of finite binary sequences with the property that every infinite 
binary sequence has a unique initial segment in $T$.
Observe that, for each $m$, there are only finitely many elements of $\Tcal$
with $m$ elements.
There is also a natural ordering on $\Tcal$, which we will refer to as \emph{dominance}:
if every element of $S$ has an extension in $T$, then we say that $S$ is dominated by $T$.
Notice that if $S$ is dominated by $T$, then $|S| \leq |T|$.
If $A$ is a finite set of binary sequences, then there is a unique minimal
element $T$ of $\Tcal$ (with respect to the order of dominance) such that
every element of $A$ has an extension in $T$.

A \emph{tree diagram} is a pair $(L,R)$ of elements of $\Tcal$
with the property that $|L|=|R|$.
A tree diagram describes a map of infinite binary sequences
as follows:
\[
s_i\cat \xi \mapsto t_i\cat \xi
\]
where $s_i$ and $t_i$ are the $i$th elements of $L$ and $R$ respectively
and $\xi$ is any binary sequence.
The collection of all such functions from $2^\Nbb$ to $2^\Nbb$ defined
in this way is \emph{Thompson's group $F$}.
This map is also defined on any finite binary sequence $u$ such that
$u$ has a prefix in $L$.
We will follow \cite{BrinSq} and write $s.f$ for the result of applying
an automorphism $f$ to the input $s$.
The operation of $F$ is therefore defined as $s.(fg) = (s.f).g$.
Thompson's group $F$ is generated by the following functions.
\[
\xi.a
 =
\begin{cases}
\seq{0}\eta & \textrm{ if } \xi = \seq{00} \eta \\
\seq{10}\eta & \textrm{ if } \xi = \seq{01} \eta \\
\seq{11}\eta & \textrm{ if } \xi = \seq{1} \eta \\
\end{cases}
\qquad \qquad
\xi.b =
\begin{cases}
\seq{0}\eta & \textrm{ if } \xi = \seq{0}\eta \\
\seq{10}\eta & \textrm{ if } \xi = \seq{100} \eta \\
\seq{110}\eta & \textrm{ if } \xi = \seq{101} \eta \\
\seq{111}\eta & \textrm{ if } \xi = \seq{11} \eta \\
\end{cases}
\]

Recall that the \emph{real projective line} is the set of all lines in $\Rbb^2$ which pass
through the origin.
Such lines can naturally be identified with elements of $\Rbb \cup \{\infty\}$ via the 
$x$-coordinate of
their intersection with the line $y=1$. 
In this article it will be useful to represent points on the
real projective line by binary sequences derived from their continued fractions expansion. 
Define a map $\Phi: 2^\Nbb \to \Rbb \cup \{\infty\}$ as follows.
First define $\phi:2^\Nbb \to [0,\infty]$ by
\[
\phi(\seq{0} \xi) = \frac{1}{1+\frac{1}{\phi(\xi)}} \qquad\qquad \phi(\seq{1} \xi) = 1 + \phi(\xi) 
\]
and set
\[
\Phi(\seq{0} \xi) = - \phi(\tilde{\xi}) \qquad\qquad \Phi(\seq{1} \xi) = \phi(\xi).
\]
This function is one-to-one except at $\xi$ which are eventually constant
(i.e. $r_k = \infty$ for some $k$).
On sequences which are eventually constant, the map is two-to-one:
$\Phi(s \seq{0 \bar 1}) = \Phi(s \seq{1 \bar 0})$ and $\Phi(\seq{\bar 0}) = \Phi(\seq{\bar 1}) = \infty$.

In the mid 1970s,
Thurston observed that the functions $a$ and $b$ from the introduction become the
generators $a$ and $b$ for Thompson's group $F$ defined above when ``conjugated'' by $\Phi$.
Moreover, the elements of $F$ correspond exactly to those homeomorphisms $f$
of $\Rbb$ which are piecewise $\PSL_2(\Zbb)$ and which have continuous derivatives.
We will generally take the viewpoint that $\Phi$
provides just another way of describing the real projective line, just as decimal expansions allow us
to describe real numbers.
In particular, we will regard the definitions of $a$ and $b$ in the introduction and the definitions given
above in terms of sequences as being two ways of describing the \emph{same functions}.

Since we will be proving that a group is finitely presented, it will be necessary to
deal with formal words over formal alphabets.
If $G$ is a group and $A$ is a subset of $G$, an $A$-word
is a finite sequence of elements of the set $A \times (\Zbb \setminus \{0\})$.
We typically denote a pair $(a,n)$ as $a^n$, but we emphasize here that it is formally
distinct from the group element $a^n$.
The \emph{word length} of an $A$-word is the sum of the absolute values of the exponents which occur
in it.

In order to prove the nonamenability of $G_0$, we will need to employ 
Zimmer's theory of amenable equivalence relations.
Let $X$ be a Polish space and let $E \subseteq X^2$ be an equivalence relation
which is Borel and which has countable equivalence classes.
$E$ is \emph{$\mu$-amenable} if, after discarding a $\mu$-measure $0$ set,
$E$ is the orbit equivalence relation of an action of $\Zbb$.
(This is not the standard definition, but it is equivalent by \cite{Connes}.)
We will need the following two results.

\begin{thm} \label{amen_OE} \cite{zimmer}
If $\Gamma$ is a countable amenable group acting by Borel automorphisms on a Polish space $X$
and $\mu$ is any $\sigma$-finite Borel measure on $X$, then
the orbit equivalence relation is $\mu$-amenable.
\end{thm}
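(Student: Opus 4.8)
The plan is to transport the amenability of $\Gamma$, in the form of a F\o lner sequence, onto the orbits of the action, and thereby verify the Reiter--F\o lner approximation property for the orbit equivalence relation. The standard definition of amenability of a measured equivalence relation (existence of an invariant measurable section for every bundle of compact convex sets on which the relation acts by affine maps) admits such a reformulation, and by the cited \cite{Connes} it is in turn equivalent to the working definition used here, namely that the relation is, off a null set, the orbit relation of a single $\Zbb$-action. Since $\Gamma$ is countable, the orbits, hence the equivalence classes of $E$, are countable, so it suffices to produce a sequence of approximately invariant probability measures along the orbits of the $\Gamma$-action.

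First I would fix a (right) F\o lner sequence $(F_n)$ for $\Gamma$, so that $|F_n \triangle F_n\delta| / |F_n| \to 0$ for each $\delta \in \Gamma$. Writing $E$ for the orbit equivalence relation, for $(x,y) \in E$ set
\[
\lambda_n(x,y) = \frac{1}{|F_n|}\,\bigl| \{ \gamma \in F_n : \gamma \cdot x = y \} \bigr|,
\]
so that $y \mapsto \lambda_n(x,y)$ is a probability measure supported on the orbit of $x$. Since each $F_n$ is finite and $\Gamma$ acts by Borel automorphisms, each $\lambda_n$ is a Borel function on $X \times X$, and hence on $E$; the measure $\mu$ plays no role in its construction.

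The heart of the matter is approximate invariance along $E$. Grouping the elements of $\Gamma$ according to the image of a fixed $x$ partitions $\Gamma$ into the sets $S_y = \{\gamma : \gamma \cdot x = y\}$. If $x' = \delta \cdot x$, then $\lambda_n(x',y) = |F_n\delta \cap S_y| / |F_n|$, and summing the pointwise differences against this partition yields
\[
\sum_{y} \bigl| \lambda_n(x,y) - \lambda_n(x',y) \bigr| \;\le\; \frac{|F_n \triangle F_n\delta|}{|F_n|},
\]
which tends to $0$ as $n \to \infty$ by the F\o lner property. This verifies the Reiter--F\o lner condition. Crucially, it holds pointwise for every pair $(x,x') \in E$: each such pair equals $(x,\delta x)$ for some $\delta \in \Gamma$, and $\Gamma$ is countable, so the hypothesis that $\mu$ be merely $\sigma$-finite --- and the absence of any quasi-invariance assumption --- is irrelevant until the very last step. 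Invoking \cite{Connes}, the existence of this approximation means that $E$ is, after deletion of a $\mu$-null set, the orbit relation of a single $\Zbb$-action, which is precisely $\mu$-amenability.

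I expect no serious obstacle in the argument above, since the genuinely deep ingredient --- the Connes--Feldman--Weiss passage from an abstract Reiter or affine-fixed-point property to the concrete presentation of $E$ as a $\Zbb$-orbit relation --- is supplied as a black box by \cite{Connes}. The only remaining care is routine measure-theoretic bookkeeping: chiefly the (immediate) Borel measurability of the $\lambda_n$ and, if one prefers to work with a finite measure, the harmless replacement of $\mu$ by a probability measure in its measure class.
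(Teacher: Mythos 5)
You should first be aware that the paper contains no proof of this statement: it is imported wholesale from Zimmer \cite{zimmer}, with \cite{Connes} invoked only to reconcile the hyperfiniteness-style definition of $\mu$-amenability used in the paper with the standard fixed-point definition. Your proposal is therefore not paralleling an argument in the text but reconstructing the content of the citation, and it does so correctly and by the standard route: transport a right F\o lner sequence of $\Gamma$ to a Borel, pointwise asymptotically invariant family of orbital probability measures (your estimate $\sum_y |\lambda_n(x,y)-\lambda_n(x',y)| \le |F_n \triangle F_n\delta|/|F_n|$ is exactly right, since the sets $S_y$ partition $\Gamma$), and then quote Connes--Feldman--Weiss to pass from this Reiter property to the statement that $E$ is a $\Zbb$-orbit relation off a null set.

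The only step you treat too lightly is the last one. The Connes--Feldman--Weiss theorem, as proved in \cite{Connes}, concerns measured equivalence relations in the Feldman--Moore sense, i.e.\ with a quasi-invariant probability measure, and your $\mu$ is neither; replacing $\mu$ by an equivalent probability measure $\mu'$ cures $\sigma$-finiteness but not quasi-invariance, since quasi-invariantization enlarges the measure class rather than preserving it. The repair is exactly the point you half-make when you stress that your Reiter estimate holds for \emph{every} pair in $E$: set $\bar\mu=\sum_k 2^{-k-1}(\gamma_k)_*\mu'$ for an enumeration $(\gamma_k)$ of $\Gamma$ with $\gamma_0$ the identity. Then $\bar\mu$ is quasi-invariant, your Reiter condition holds $\bar\mu$-a.e.\ because it holds everywhere, CFW yields a $\bar\mu$-null set off which $E$ is generated by a single transformation, and every $\bar\mu$-null set is $\mu$-null since $\bar\mu \ge \tfrac{1}{2}\mu'$. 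With that paragraph inserted (or with the version of CFW for arbitrary probability measures recorded in \cite{KechMiller} cited in place of \cite{Connes}), your proof is complete.
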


\begin{thm} \label{PSL2_nonamen} \cite{CarGhys} (see also the discussion in \cite{Monod})
If $\Gamma$ is a countable dense subgroup of $\PSL_2(\Rbb)$, then the action of $\Gamma$
on the real projective line induces an orbit equivalence relation which is not amenable
with respect to Lebesgue measure. 
\end{thm}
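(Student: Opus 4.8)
The plan is to assume that the orbit equivalence relation $E$ induced by $\Gamma$ on the real projective line $X = \Rbb \cup \{\infty\}$ is amenable with respect to Lebesgue measure and to derive a contradiction; write $G = \PSL_2(\Rbb)$. The single point at which density enters directly is the observation that a dense subgroup of $G$ cannot be amenable as a discrete group. Indeed, an amenable $\Gamma$ has the fixed point property, and in any continuous affine action of $G$ on a compact convex set a $\Gamma$-fixed point is fixed by all of $\overline{\Gamma} = G$ by continuity; this would make $G$ amenable, which it is not. So it suffices to contradict the non-amenability of $\Gamma$.

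The obstruction to doing this directly is that Lebesgue measure on $X$ is only quasi-invariant, not $\Gamma$-invariant, and amenability of $E$ therefore does not obviously produce an invariant mean on $\Gamma$. The device I would use to create a genuinely invariant measure is to pass to ordered pairs of distinct points. The orbit relation of the diagonal $\Gamma$-action on $X \times X$ is contained in $E \times E$; since $E$ is amenable, hence hyperfinite, $E \times E$ and all of its Borel subrelations are hyperfinite, so the diagonal relation is amenable. I would then restrict to the $\Gamma$-invariant conull set $X \times X \setminus \Delta$ and identify it $G$-equivariantly with $G/A$, where $A$ is the diagonal Cartan subgroup fixing the ordered pair $(0,\infty)$. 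The payoff is that $A$ and $G$ are unimodular, so $G/A$ carries an honest $G$-invariant measure $\nu$, lying in the measure class of $\mathrm{Leb}\times\mathrm{Leb}$; as amenability depends only on the measure class, the diagonal relation is amenable for the $\Gamma$-invariant measure $\nu$. This action is essentially free: a nonidentity M\"obius transformation fixes an ordered pair of distinct boundary points only when it is hyperbolic, in which case that pair is its unique axis, so the non-free locus is a countable, hence $\nu$-null, set.

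The argument then closes by invoking a partial converse to Theorem \ref{amen_OE}: a measure-preserving, essentially free action of a countable group with amenable orbit equivalence relation must be an action of an amenable group. The proof I would give integrates the measurable field of invariant means that witnesses amenability of the relation (as furnished by the Connes--Feldman--Weiss theory, cf. \cite{Connes}): trivializing each orbit against a basepoint converts this field into a measurable assignment $y \mapsto \lambda_y$ of means on $\ell^\infty(\Gamma)$ with $\lambda_{\delta y} = \delta\cdot\lambda_y$, whereupon $\int_{G/A} \lambda_y\, d\nu(y)$ is a $\Gamma$-invariant mean precisely because $\nu$ is $\Gamma$-invariant. Applied to $\Gamma \curvearrowright (G/A,\nu)$, this makes $\Gamma$ amenable, contradicting the first paragraph.

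I expect the main obstacle to be exactly this last implication. It is genuinely delicate because amenability of a relation supplies only a measurably varying field of invariant means rather than a single one --- this is precisely why quasi-invariant boundary actions of free groups can have amenable orbit relations even though the groups are non-amenable. The entire force of the proof lies in having first acquired a truly $\Gamma$-invariant measure by passing to $G/A$, which is what lets the integration repair that gap; making the measurability and equivariance of the field of means precise is where the real work resides.
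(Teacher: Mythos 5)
A preliminary remark on the comparison itself: the paper does not prove this theorem --- it is quoted from \cite{CarGhys}, with a pointer to the discussion in \cite{Monod} --- so your proposal can only be measured against that cited argument. Its skeleton is indeed the one you follow, and your first stages are sound: the fixed-point argument showing a dense subgroup of $\PSL_2(\Rbb)$ cannot be amenable as a discrete group, the passage (via hyperfiniteness of products and subrelations, using \cite{Connes}) to the diagonal relation on pairs, the identification of the set of ordered pairs of distinct points with $G/A$, the existence of a $G$-invariant measure $\nu$ there in the Lebesgue class, and essential freeness are all correct.

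The final step, however, contains a genuine and fatal gap. The invariant measure $\nu$ on $G/A$ is \emph{infinite}: in an affine chart it is $dx\,dy/(x-y)^2$, which already has infinite mass near the diagonal. Consequently $\int_{G/A}\lambda_y\,d\nu(y)$ cannot be normalized and is not a mean; the integration argument you sketch is Zimmer's, and it genuinely requires a \emph{finite} invariant measure. Worse, the lemma you invoke (``measure-preserving $+$ essentially free $+$ amenable orbit relation $\Rightarrow$ amenable group'') is false for $\sigma$-finite infinite invariant measures, and false on exactly the space you are using. Take $\Gamma = \PSL_2(\Zbb)$. Its orbit relation on the real projective line \emph{is} amenable with respect to Lebesgue measure: the stabilizer $P$ of a point is a closed solvable (hence amenable) subgroup, so the $G$-action on $G/P$ is Zimmer-amenable, as is its restriction to the closed subgroup $\PSL_2(\Zbb)$, and essential freeness then makes the orbit relation amenable; concretely, off a null set this relation is tail equivalence of continued-fraction expansions, hence hyperfinite. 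Your (valid) second paragraph then shows the diagonal $\PSL_2(\Zbb)$-relation on $(G/A,\nu)$ is amenable; this action is essentially free, $\nu$-preserving, and even ergodic and conservative. If your closing lemma were true, it would prove that $\PSL_2(\Zbb)$ is amenable. Since it is not, the lemma is false --- and note that adding ergodicity or conservativity as hypotheses cannot rescue it, since the lattice action has both.

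This counterexample also pinpoints the conceptual error announced in your first paragraph: density of $\Gamma$ cannot enter only through the non-amenability of $\Gamma$, because $\PSL_2(\Zbb)$ is also non-amenable and satisfies every other hypothesis your closing argument uses, yet for it the conclusion of the theorem genuinely fails --- the boundary relation of a discrete subgroup is amenable. Any correct proof must use density a second time, at the final stage, precisely to separate dense subgroups from lattices; accomplishing this (rather than appealing to a routine converse of Theorem~\ref{amen_OE}) is the actual content of \cite{CarGhys}, and it is where the real work of the theorem lies.
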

\noindent
We refer the reader to \cite{KechMiller} for further information
on amenable equivalence relations. 

We will conclude this section by sketching a proof that the group $G_0$ from
the introduction is nonamenable.
Let $K$ denote the subgroup of $\PSL_2(\Rbb)$ generated by
the matrices
\[
\left( \begin{array}{ccc}
1 & 1\\
0 & 1 \\ \end{array} \right)
\quad
\left( \begin{array}{ccc}
0 & 1\\
-1 & 0 \\ \end{array} \right)
\quad
\left( \begin{array}{ccc}
\sqrt{2} & 0\\
0 & \frac{1}{\sqrt{2}} \\ \end{array} \right).
\]
Viewed as fractional linear transformations, $K$ is generated
by $t+1$, $2t$, and $-1/t$.
Since $K$ contains $\PSL_2(\Zbb)$ as a proper subgroup, it is dense in
$\PSL_2(\Rbb)$ and hence by Theorem \ref{PSL2_nonamen}, the orbit equivalence
relation of its action on the real projective line
is not amenable with respect to Lebesgue measure.
By Theorem \ref{amen_OE}, it is sufficient to show that $G_0$ induces the same
orbit equivalence relation on $\Rbb \setminus \Qbb$. 
To see this, it can be verified that the element
$bca^{-1}c^{-1}a$ coincides with $t \mapsto 2t$ on the interval $[0,1]$.
From this and the identity $2(r-n) + 2n = 2r$ it follows that
the orbits of $G_0$ include the orbits of the action of
$\langle t \mapsto t+1, t \mapsto 2t\rangle$ on $\Rbb \setminus \Qbb$.
Finally $aba$ and $ba^{-3}$ coincide with $t \mapsto -1/t$ on the intervals $[-1,-1/2]$
and $[1/2,1]$ respectively.
The assertion about orbits now follows from the fact that for any $r \in \Rbb$,
there is a $n \in \Zbb$ such that $2^n r$ is in $[-1,-1/2] \cup [1/2,1]$ and
$(2^n)(-1/(2^n r)) = -1/r$.

\section{The presentations}

\label{presentation}

In this section we will describe both a finite and infinite presentation of the group
$G_0$ defined in the introduction.
We start with the following two primitive functions defined on binary sequences:
\[
\xi.x
 =
\begin{cases}
\seq{0}\eta & \textrm{ if } \xi = \seq{00} \eta \\
\seq{10}\eta & \textrm{ if } \xi = \seq{01} \eta \\
\seq{11}\eta & \textrm{ if } \xi = \seq{1} \eta \\
\end{cases}
\qquad
\xi.y =
\begin{cases}
\seq{0}(\eta.y) & \textrm{ if } \xi = \seq{00} \eta \\
\seq{10}(\eta.y^{-1}) & \textrm{ if } \xi = \seq{01} \eta \\
\seq{11}(\eta.y) & \textrm{ if } \xi = \seq{1} \eta \\
\end{cases}
\]
(The function $x$ is nothing but the function $a$ described in the previous section.)
From these functions, we define families of functions $x_s$ $(s \in 2^{<\Nbb})$ and $y_s$ $(s \in 2^{<\Nbb})$
which act just as $x$ and $y$, but localized to those binary sequences which extend $s$.
\[
\xi.x_s
 =
\begin{cases}
s \cat (\eta.x) & \textrm{ if } \xi = s \cat \eta \\
\xi & \textrm{otherwise}
\end{cases}
\qquad
\xi.y_s
 =
\begin{cases}
s \cat (\eta.y) & \textrm{ if } \xi = s \cat \eta \\
\xi & \textrm{otherwise}
\end{cases}
\]
If $s$ is the empty-string, it will be omitted as a subscript.
The relationship between these functions and the functions $a$, $b$, and $c$ of the introduction is
expressed by the following proposition.

\begin{prop}
For all $\xi$ in $2^\Nbb$,
$\phi(\xi.y) = 2 \phi(\xi)$
and
\[
\Phi(\xi).a = \Phi(\xi.x) \qquad \Phi(\xi).b = \Phi(\xi.x_{\seq{1}}) \qquad \Phi(\xi).c = \Phi(\xi.y_{\seq{10}}).
\]
\end{prop}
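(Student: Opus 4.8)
The plan is to establish the four identities by case analysis on the leading bits of $\xi$, after first recording the auxiliary complementation identity
\[
\phi(\tilde\xi) = \frac{1}{\phi(\xi)} \qquad (\xi \in 2^\Nbb).
\]
This follows because the function $\xi \mapsto 1/\phi(\xi)$ satisfies the same two defining clauses as $\xi\mapsto\phi(\tilde\xi)$ (complementation swaps $\seq 0$ and $\seq 1$, and $t\mapsto 1/t$ conjugates the two M\"obius pieces of $\phi$ into each other). Throughout I will use that $\phi$ restricts to a bijection of $2^\Nbb$ onto $[0,\infty]$ off the countable set of eventually-constant sequences, where it is two-to-one; on that exceptional set the identities follow from the explicit identifications $\Phi(s\seq{0\bar1}) = \Phi(s\seq{1\bar0})$ and $\Phi(\seq{\bar0})=\Phi(\seq{\bar1})=\infty$, so I suppress these points below.

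The crux is the doubling identity $\phi(\xi.y)=2\phi(\xi)$, and this is where the main obstacle lies: the recursion defining $y$ is self-referential, since its $\seq{01}$-clause feeds the tail into $y^{-1}$ rather than $y$, so there is no induction on word length available. I would sidestep this by letting $g$ be the conjugate of the doubling map $t\mapsto 2t$ by $\phi$, so that $\phi(\xi.g)=2\phi(\xi)$ and $\phi(\xi.g^{-1})=\tfrac12\phi(\xi)$ hold by construction. It then suffices to verify that $g$ satisfies the three defining clauses of $y$; since those clauses determine at least one further bit of the output at each application and so pin down the map uniquely, this forces $g=y$. Each clause is checked by computing $\phi$ of both sides and invoking injectivity of $\phi$: for example the $\seq{01}$-clause amounts to $2\phi(\seq{01}\eta)=\phi(\seq{10}(\eta.g^{-1}))$, which unwinds via $\phi(\eta.g^{-1})=\tfrac12\phi(\eta)$ to the elementary identity $\tfrac{2p+2}{p+2}=1+\tfrac{1}{1+2/p}$ with $p=\phi(\eta)$; the $\seq{00}$- and $\seq1$-clauses are similar and simpler.

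For $a$ I would prove the equivalent statement $\Phi(\xi.x)=\Phi(\xi)+1$ by splitting on the leading bits of $\xi$. The clauses $\seq1\eta\mapsto\seq{11}\eta$ and $\seq{00}\eta\mapsto\seq0\eta$ reduce directly, via $\phi(\seq1\eta)=1+\phi(\eta)$, to $\Phi(\seq{11}\eta)=1+\Phi(\seq1\eta)$ and $\Phi(\seq{00}\eta)+1=\Phi(\seq0\eta)$; only the clause $\seq{01}\eta\mapsto\seq{10}\eta$ requires the complementation identity, which rewrites both sides in terms of $p=\phi(\eta)$ and matches $-\tfrac{1}{p+1}+1=\tfrac{p}{p+1}$. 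For $b$ I first note that $\seq0$-sequences are carried by $\Phi$ into $\{t\le 0\}$, where $b$ and $x_{\seq1}$ both act trivially; on $\seq1$-sequences the claim reduces to $b(\phi(\eta))=\phi(\eta.x)$. The three subcases $\eta\in\seq{00}\cdot,\ \seq{01}\cdot,\ \seq1\cdot$ place $\phi(\eta)$ exactly into $[0,\tfrac12],\ [\tfrac12,1],\ [1,\infty]$, matching the three pieces $\tfrac{t}{1-t},\ 3-\tfrac1t,\ t+1$ of $b$; in each subcase $\eta.x$ is read off from the definition of $x$ and the equality is a one-line M\"obius computation.

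Finally, for $c$ I would observe that $\Phi$ carries the sequences extending $\seq{10}$ onto $[0,1]$ and all remaining sequences into $(-\infty,0]\cup[1,\infty]$, where $c$ is the identity (it fixes $0$, $1$, and $\infty$); since $y_{\seq{10}}$ fixes every sequence not extending $\seq{10}$, both sides agree there. For $\xi=\seq{10}\eta$ the claim becomes $c(\phi(\seq0\eta))=\phi(\seq0(\eta.y))$, and substituting the doubling identity $\phi(\eta.y)=2\phi(\eta)$ from the second paragraph reduces it, with $t=\phi(\seq0\eta)=\tfrac{\phi(\eta)}{1+\phi(\eta)}$, to the single identity $\tfrac{2t}{1+t}=\tfrac{2\phi(\eta)}{1+2\phi(\eta)}$. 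Thus the doubling identity drives the $c$-computation and the complementation identity drives the $a$-computation, while everything else is the elementary fractional-linear bookkeeping indicated above.
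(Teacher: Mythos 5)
Your proposal is correct in its computations and takes a genuinely different route from the paper at the crucial step. The paper handles the self-referential recursion for $y$ by a topological argument: since $\phi$ and $y$ are continuous, the identity $\phi(\xi.y)=2\phi(\xi)$ need only be checked on the dense set of eventually constant sequences, where it is proved by induction on the least digit beyond which the sequence is constant (the appeal to $\phi(\xi.y^{-1})=\frac{1}{2}\phi(\xi)$ in the $\seq{01}$ case being absorbed into the induction hypothesis). You instead invert the logic: you take $g$ to be the $\phi$-conjugate of $t\mapsto 2t$, so that the doubling identity holds for $g$ by fiat, and then show $g=y$ by verifying that $g$ satisfies the three defining clauses of $y$ and appealing to the fact that those clauses admit at most one solution (each application fixes at least one further output bit). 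This replaces the paper's induction by a uniqueness principle, and it cleanly isolates why the self-reference through $y^{-1}$ is harmless: $\phi(\eta.g^{-1})=\frac{1}{2}\phi(\eta)$ is true by construction rather than by a strengthened induction hypothesis. Your treatment of $a$ and $b$ (via the complementation identity $\phi(\tilde\xi)=1/\phi(\xi)$, itself another instance of the uniqueness-of-recursion principle) supplies exactly the verifications the paper declares ``similar and left to the reader,'' and your $c$ computation coincides with the paper's.

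One caveat you should repair: $g$ is only well-defined where $\phi$ is injective, i.e.\ on sequences that are not eventually constant, so your uniqueness argument yields $g=y$, and hence the doubling identity, only on that set. Your proposed fix for the exceptional countable set --- the two-to-one identifications of $\Phi$ --- does not by itself determine what $y$ does on eventually constant sequences, so it cannot finish the argument there. The one-line patch is precisely the observation that opens the paper's proof: $\phi$, $y$, and the generators are continuous, and the non-eventually-constant sequences are dense in $2^\Nbb$, so all four identities extend by continuity once they are known off the exceptional set. With that substitution your argument is complete.
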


\begin{remark}
The effects of doubling on continued fractions was first worked out by Hurwitz \cite{Hurwitz}.
Raney introduced transducers
for making calculations such as these in \cite{cont_frac_aut}.
\end{remark}

\begin{proof}
We will only prove the identities $\phi(\xi.y) = 2 \phi(\xi)$ and $\Phi(\xi).c = \Phi(\xi.y_{\seq{10}})$;
the remaining verifications are similar and left to the reader.
We will first verify the identity $\phi(\xi.y) = 2 \phi(\xi)$.
Observe that, since $\phi$ and $y$ are continuous, it suffices to verify this equality
for sequences which are eventually constant.
The proof is now by induction on the minimum digit beyond which the sequence is constant.
For the base case we have:
\[
\phi(\seq{\bar 0}.y) = \phi(\seq{\bar 0}) = 0 = 2\cdot 0  \qquad  \phi(\seq{\bar 1}.y) = \phi(\seq{\bar 1}) = \infty = 2 \cdot \infty.
\]
In the inductive step, we have three cases:
\[
\phi(\seq{00} \xi . y) = \phi(\seq{0}(\xi . y)) = \frac{1}{1 + \frac{1}{\phi(\xi . y)}}
=\frac{1}{1 + \frac{1}{2 \phi(\xi)}} =
\frac{2}{2 + \frac{1}{\phi(\xi)}} = 2 \phi(\seq{00} \xi)
\]
\[
\phi(\seq{01} \xi . y) = \phi(\seq{10}(\xi . y^{-1})) = 1 + \frac{1}{1 + \frac{1}{\phi(\xi . y^{-1})}}
= 1 + \frac{1}{1 + \frac{2}{\phi(\xi)}} = \frac{2}{1 + \frac{1}{1 + \phi(\xi)}} = 2 \phi(\seq{01} \xi)
\]
\[
\phi(\seq{1} \xi . y) = \phi(\seq{11}(\xi . y)) = 2 + \phi(\xi . y) = 2 + 2\phi(\xi) = 2(1 + \phi(\xi)) = 2\phi(\seq{1}\xi).
\]

Next we turn to the verification of $\Phi(\xi).c = \Phi(\xi.y_{\seq{10}})$.
Observe that if $\xi$ does not extend $\seq{10}$, then $\Phi(\xi)$ is outside the interval $(0,1)$ and we have
$\Phi(\xi).c = \Phi(\xi) = \Phi(\xi.y_{\seq{10}})$.
The remaining case follows from the identity we have already established, noting that $\frac{2t}{t+1} = \frac{2}{1+1/t}$:
\[
\Phi(\seq{10}\xi).c = \left( \frac{1}{1+\frac{1}{\phi(\xi)}} \right).c =
\frac{2}{2 + \frac{1}{\phi(\xi)}} = \frac{1}{1 + \frac{1}{2\phi(\xi)}} = \frac{1}{1+\frac{1}{\phi(\xi.y)}} = \Phi(\seq{10}\xi . y_{\seq{10}}).
\]
\end{proof}
From this point forward, we will identify $a$, $b$, and $c$ with $x$, $x_\seq{1}$, and $y_{\seq{10}}$,
respectively and suppress all mention of $\Phi$.

We now return to our discussion of the generators.
It is straightforward to verify that the following relations are satisfied by these elements,
where $s$ and $t$ are finite binary sequences:
\begin{enumerate}

\item \label{pent} $x_s^2 = x_{s\seq{0}} x_s x_{s\seq{1}}$;

\item \label{xx} if $t . x_s$ is defined, then
$x_t x_s = x_s x_{t . x_s}$;

\item \label{xy} if $t . x_s$ is defined, then
$y_t x_s = x_s y_{t . x_s}$;

\item \label{yy} if $s$ and $t$ are incompatible, then $y_s y_t = y_t y_s$;

\item \label{rw} $y_s = x_s y_{s \seq{0}} y_{s \seq{10}}^{-1} y_{s \seq{11}}$.

\end{enumerate}
We will refer to these relations collectively as $R$.
The first two groups of relations are known to give a presentation for $F$:
the function $x_{\seq{1}^n}$ corresponds to the $n$th generator in the
standard infinite presentation of $F$.
We will use $F$ to denote the group generated by $\{x_s : s \in 2^{<\Nbb}\}$.

Notice that any $y_s$ is conjugate by an element of $F$ to exactly one of $y$,
$y_{\seq{0}}$, $y_{\seq{1}}$, or $y_{\seq{10}}$.
Define $X = \{x_s : s \in 2^{<\Nbb}\}$, $Y = \{y_s : s \in 2^{<\Nbb}\}$, and $Y_0$ to be the set of all
$y_s$ such that $s$ is not a constant binary sequence.
Observe that $Y_0$ consists of those elements of
$Y$ which are conjugate to $y_{\seq{10}}$ by an element of $F$.
The group $G_0$ defined in the introduction is therefore generated by the (redundant) generating set
$S_0 = X \cup Y_0$.

Let $R_0$ be those relations in $R$ which only refer to generators in $S_0$ and
let $G$ be the group generated by $S = X \cup Y$.
The rest of the paper will focus on proving the following theorem.

\begin{thm}
The relations $R$ give a presentation for $G$ and the relations $R_0$ give a presentation
for $G_0$.
Moreover, $G$ and $G_0$ admit finite presentations.
\end{thm}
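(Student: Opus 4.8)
The plan is to prove the three assertions by the standard strategy for certifying a presentation: exhibit the obvious surjection and then prove injectivity by means of a normal form whose uniqueness is guaranteed by the concrete action on $2^\Nbb$. I would treat $G$ first and obtain $G_0$ by restricting the same argument to the sub-alphabet $S_0$, deducing the finite presentations from the infinite ones at the very end. Since the relations $R$ have been checked to hold for the homeomorphisms, we already have surjections $\langle S \mid R\rangle \twoheadrightarrow G$ and $\langle S_0 \mid R_0\rangle \twoheadrightarrow G_0$, so the entire content is injectivity.

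For injectivity I would first establish a normal form. Using relation (\ref{xy}) together with its inverse form $y_t^{-1} x_s = x_s y_{t.x_s}^{-1}$, one migrates every $x$-letter to the left past every $y$-letter, rewriting an arbitrary word as $f\cdot w$ with $f$ a word in $X$ and $w$ a word in $Y^{\pm1}$; when $t.x_s$ is not defined one first replaces $y_t$ by deeper $y$-generators using (\ref{rw}) until (\ref{xy}) applies. The $x$-part $f$ is then put into the standard normal form of $F$ using (\ref{pent}) and (\ref{xx}), which are known to present $F$. It remains to normalize the $y$-tail $w$: relation (\ref{yy}) sorts factors with pairwise incompatible subscripts into increasing $<_\lex$-order, while (\ref{rw}), read as the contraction $y_{s\seq{0}} y_{s\seq{10}}^{-1} y_{s\seq{11}} \mapsto x_s^{-1} y_s$, removes the one forbidden nested pattern, the newly produced $x_s^{-1}$ being sent back to the left. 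The target is a normal form $f \cdot y_{s_1}^{n_1} \cdots y_{s_k}^{n_k}$ with $f$ in $F$-normal form, the $s_i$ an antichain listed in increasing lexicographic order, every $n_i \neq 0$, and no contractible triple present.

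Proving that every word actually reaches such a form is the main obstacle, and it is precisely the content of Section \ref{sufficient}. The delicate relation is (\ref{rw}): it changes both the number of $y$-letters and the depth of their subscripts, so termination is not evident. I would control it with a well-chosen complexity measure — for instance the total subscript length of the $y$-tail together with the number of incompatibilities — letting contractions strictly decrease the measure and treating the (\ref{yy})-commutations as neutral moves, and then verify confluence by a Diamond-Lemma analysis of the finitely many ways the left-hand sides of (\ref{xy}), (\ref{yy}) and (\ref{rw}) can overlap. Once existence and uniqueness of the normal form are in hand, injectivity follows: a nontrivial normal form is shown to move some sequence in $2^\Nbb$ under the explicit action of Section \ref{presentation}, since the supports of the $F$-part and of the individual factors $y_{s_i}$ can be read off directly. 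This proves that $R$ presents $G$. The identical reduction, applied to words over $S_0$, never leaves the alphabet $S_0$ — by (\ref{rw}) a nonconstant $s$ produces only the nonconstant subscripts $s\seq{0}, s\seq{10}, s\seq{11}$, and by (\ref{xy}) conjugation by $F$ preserves nonconstancy of subscripts — so the same argument shows that $R_0$ presents $G_0$.

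Finally, for the finite presentations I would pass from the infinite presentations by Tietze transformations, as carried out in Section \ref{presentation}. The group $F$ is finitely presented, and the two elements $x = a$ and $x_\seq{1} = b$ generate $F$ and, via (\ref{pent}) and (\ref{xx}), express every $x_s$; likewise every $y_s \in Y_0$ is an $F$-conjugate of $c = y_{\seq{10}}$, so $a,b,c$ generate $G_0$. Because conjugation by $F$ is already internal to the group, each of the infinite families (\ref{xy}), (\ref{yy}), (\ref{rw}) is the $F$-conjugate orbit of finitely many base instances, and one checks that those base instances, together with the finite presentation of $F$, imply the whole family; eliminating the redundant generators then yields the stated presentation with $3$ generators and $9$ relations. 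The passage for $G$ itself is the same, now retaining the four conjugacy representatives $y, y_{\seq{0}}, y_{\seq{1}}, y_{\seq{10}}$ among the $y$-generators.
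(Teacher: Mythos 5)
Your overall architecture (surjectivity is free since the relations were verified; the content is injectivity; finite presentability follows by Tietze moves as in Section \ref{presentation}) matches the paper, and your last paragraph on the finite presentations is essentially the paper's own argument via Propositions \ref{relF} and \ref{incompatible}. The fatal problem is your choice of normal form. You require the $y$-tail $y_{s_1}^{n_1}\cdots y_{s_k}^{n_k}$ to have subscripts forming an \emph{antichain}, with nesting eliminated by the contraction $y_{s\seq{0}}y_{s\seq{10}}^{-1}y_{s\seq{11}} \mapsto x_s^{-1}y_s$. But that contraction only applies when all three ``child'' generators occur with matching signs; it cannot remove nesting such as $y_{s\seq{0}}y_s$, and in fact no sequence of moves can: not every element of $G$ admits an antichain representation. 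Concretely, take $g = y_{\seq{100}}\,y_{\seq{10}}$ (an element of $G_0$). In the local coordinate identifying the sequences extending $\seq{10}$ with $[0,\infty]$ via $\phi$, this element is $u\mapsto 4u/(1+u)$ on $[0,1]$ and $u\mapsto 2u$ on $[1,\infty]$; it has exactly three breakpoints, each with a hyperbolic (non-parabolic) break. For an antichain form $f\cdot\prod_i y_{t_i}^{n_i}$, all non-parabolic breaks must occur at $f$-preimages of endpoints of the cylinders of the $t_i$, and matching the three break ratios $(4,2,2)$ forces, after a short case analysis, $f\in F$ to agree on a nonempty open interval with a conjugate of $u\mapsto 2^n u$, $n\neq 0$ --- impossible, since such a map is not in $\PSL_2(\Zbb)$ (its normalized matrix has trace $2^{n/2}+2^{-n/2}\notin\Zbb$). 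This is precisely why the paper's standard forms, and its ``sufficiently expanded'' forms, permit nested subscripts. Since your target normal form does not exist for all elements, your rewriting procedure cannot terminate in it, your proposed termination measure is moot (and, separately, unjustified: the expansions needed to make relation (\ref{xy}) applicable \emph{increase} both the number of $y$-letters and subscript lengths, which is why Lemma \ref{expanded} needs its rather delicate well-founded order on prefix trees and exponents), and the Diamond-Lemma confluence analysis has nothing to be confluent \emph{to}.

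The collapse of the antichain form also destroys your injectivity endgame. With disjoint supports, ``a nontrivial normal form moves some sequence'' would indeed be easy; but once nesting is unavoidable, the $y$-exponents encountered along a single branch of $2^\Nbb$ can cancel against one another, and ruling out the possibility that a nontrivial nested $Y$-word evaluates to an element of $F$ (or to the identity) is the paper's hardest work: this is the role of the ``sufficiently expanded'' condition and of Lemmas \ref{potential_cancelation}, \ref{advance_yn} and \ref{evaluation}, which set up the $B$-word advancement calculus, show cancellations cannot be created, and construct an explicit input $u\cat\xi$ whose image $v\cat(\xi.y^n)$, $n>0$, fails tail-equivalence. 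Note also that the paper never proves (and never needs) uniqueness of any normal form or confluence of the rewriting system; it only needs the one-directional statement that every word reduces to a sufficiently expanded standard form and that such a form evaluating into $F$ must be an $X$-word, after which the known presentation of $F$ finishes the argument. Your proposal, by contrast, loads all the difficulty into existence-plus-uniqueness of a normal form that does not exist.
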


In the remainder of this section, we will prove that 
$G$ and $G_0$ are finitely presented assuming that $R$ and $R_0$ give presentations for these groups.
The finite generating sets for these groups are $\{x,x_{\seq{1}},y_{\seq{0}},y_{\seq{1}},y_{\seq{10}}\}$ and
$\{a,b,c\} = \{x,x_{\seq{1}},y_{\seq{10}}\}$, respectively.
Before proceeding, it will be necessary to define the other generators as words in terms
of these generators; these definitions will be compatible with equalities which hold in $G$.
We begin by declaring
\[
y = x y_{\seq{0}} y_{\seq{10}}^{-1} y_{\seq{11}} \qquad x_{\seq{0}} = x^2 x_{\seq{1}}^{-1} x^{-1}
\qquad
x_{\seq{10}} = x_{\seq{1}}^2 x^{-1} x_{\seq{1}}^{-1} x x_{\seq{1}}^{-1}.
\]
Observe that $\seq{0}.x^{-n} = \seq{0}^{n+1}$ and $\seq{1} . x^n = \seq{1}^{n+1}$ and
set
\[
x_{\seq{0}^{n+1}} = x^{n} x_{\seq{0}} x^{-n} \qquad x_{\seq{1}^{n+1}} = x^{-n} x_{\seq{1}} x^n
\]
\[
y_{\seq{0}^{n+1}} = x^n y_{\seq{0}} x^{-n} \qquad y_{\seq{1}^{n+1}} = x^{-n} y_{\seq{1}} x^n.
\]
If $s \in 2^{<\Nbb}$ is nonconstant, fix a word $f_s$ in $\{x,x_{\seq{1}}\}$ such that
$\seq{10}.f_s = s$ and define
\[
x_s = f_s^{-1} x_{\seq{10}} f_s \qquad
y_s = f_s^{-1} y_{\seq{10}} f_s.
\]
Next we note the following two standard properties of $F$.

\begin{prop}\label{relF}
If $g$ is any element of $F$ and $s$ is a finite binary sequence
such that $s.g$ is defined, then $x_s g = g x_{s.g}$.
In particular if $g$, $x_s$ and $x_{s.g}$ are expressed as words in $\{x,x_{\seq{1}}\}$,
then the above equality is derivable from the relations in (\ref{pent}) and (\ref{xx}) above.
\end{prop}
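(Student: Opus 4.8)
The plan is to treat the two assertions separately. For the underlying group identity $x_s g = g x_{s.g}$, I would argue directly from the action on $2^\Nbb$, using $\xi.(fg) = (\xi.f).g$. The key observation is that, because $s.g$ is defined, $g$ carries the cone $\{s\cat\zeta : \zeta \in 2^\Nbb\}$ onto the cone $\{(s.g)\cat\zeta\}$ by the rule $s\cat\zeta \mapsto (s.g)\cat\zeta$; this is read off the tree diagram of $g$ by writing $s = s_i \cat w$, where $s_i$ is the leaf of the left tree that $s$ extends, so that $s.g = t_i \cat w$ and $s_i\cat\zeta \mapsto t_i\cat\zeta$. Given this, I would split on whether an input $\xi$ extends $s$: if $\xi = s\cat\eta$ then both $x_s g$ and $g x_{s.g}$ send it to $(s.g)\cat(\eta.x)$, while if $\xi$ does not extend $s$ then $\xi.g$ does not extend $s.g$, so $x_{s.g}$ fixes $\xi.g$ and both sides return $\xi.g$.

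For the derivability assertion, I would first note that relation (\ref{xx}) is exactly the statement in the case $g = x_u$: it reads $x_s x_u = x_u x_{s.x_u}$ whenever $s.x_u$ is defined. The inverse case $g = x_u^{-1}$ reduces to the same relation: writing $r = s.x_u^{-1}$ we have $r.x_u = s$, so (\ref{xx}) gives $x_r x_u = x_u x_s$, which rearranges to $x_s x_u^{-1} = x_u^{-1} x_r$. For a general $g$ I would invoke the quoted fact that the relations (\ref{pent}) and (\ref{xx}) present $F$: by the first paragraph the identity $x_s g = g x_{s.g}$ holds in $F$, hence it is a consequence of these defining relations, and since the chosen expressions for each $x_s$ and $x_{s.g}$ as words in $\{x, x_{\seq{1}}\}$ are themselves consequences of the same relations, substituting them keeps the equality derivable.

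The step I expect to require the most care is making this derivability explicit without leaning on the presentation citation, i.e. running a direct induction on the word length of $g = g_1\cdots g_k$ and applying the single-generator case at each stage. The obstacle there is definedness: to apply (\ref{xx}) at the $i$th stage one needs the intermediate image $s.(g_1\cdots g_i)$ to be defined, whereas only $s.g$ is assumed defined, and a coarse generator such as $x$ can leave a partial image undefined (for instance $\seq{0}.x$ is undefined). Controlling this requires either passing to a conveniently factored word for $g$ in which every partial image at $s$ is defined, or invoking relation (\ref{pent}) to rewrite across the levels where the splitting occurs. I would organize the induction so that this combinatorial bookkeeping is isolated, and otherwise defer to the known presentation of $F$ for the clean statement.
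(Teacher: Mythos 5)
The paper gives no proof of this proposition at all --- it is introduced with ``we note the following two standard properties of $F$'' --- so there is no argument of the paper's to compare against; the only ingredient it implicitly rests on is the fact, assumed without proof in Section \ref{sufficient} with citations to Belk and to Cannon--Floyd--Parry, that the relations in (\ref{pent}) and (\ref{xx}) present $F$ on the generating set $X$. Your proof is correct and supplies exactly the reasoning the paper leaves to the reader: the cone computation establishing the identity in $F$ is right (both sides send $s \cat \eta$ to $(s.g)\cat(\eta.x)$, and since $g$ carries the cone of extensions of $s$ bijectively onto that of $s.g$, an input not extending $s$ is moved by $g$ to a sequence not extending $s.g$, which $x_{s.g}$ fixes); the cases $g = x_u^{\pm 1}$ are relation (\ref{xx}) and its rearrangement; and the appeal to the presentation fact for general $g$ --- including the observation that the defining $\{x, x_{\seq{1}}\}$-words for the various $x_s$ are equalities holding in $F$ and hence are themselves derivable, so substituting them preserves derivability --- is legitimate and non-circular, since that fact is an external input to the paper rather than a consequence of Proposition \ref{relF}. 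The definedness obstacle you flag for a fully explicit induction on the word length of $g$ (intermediate images such as $\seq{0}.x$ may be undefined even when $s.g$ is defined) is a genuine subtlety, and it is precisely why routing the general case through the cited presentation of $F$, as you do and as the paper's framing intends, is the sensible choice.
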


\begin{prop}\label{incompatible}
If $u <_\lex v$ are incompatible binary sequences,
then there is a $g$ in $F$ and
$s <_\lex t$ each of length at most $3$
such that $s.g = u$ and $t.g = v$.
\end{prop}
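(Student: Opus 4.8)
The plan is to pass to the standard geometric realization of $F$ as the increasing piecewise-linear homeomorphisms of $[0,1]$ with dyadic breakpoints and slopes in $2^{\Zbb}$, under which each finite sequence $s$ names the dyadic interval $I_s$ of points whose binary expansion begins with $s$, and $s.g$ is defined precisely when $g$ is affine on $I_s$, in which case $I_{s.g}=g(I_s)$. Since $u$ and $v$ are incompatible with $u<_\lex v$, the intervals $I_u$ and $I_v$ are disjoint with $I_u$ lying entirely to the left of $I_v$. Thus producing $g\in F$ and $s<_\lex t$ with $s.g=u$ and $t.g=v$ is the same as producing disjoint dyadic intervals $I_s$ (on the left) and $I_t$ (on the right), with $s$ and $t$ of length at most $3$, together with $g\in F$ carrying $I_s$ affinely onto $I_u$ and $I_t$ affinely onto $I_v$. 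Note that since $g$ is increasing it sends the left interval to the left interval, so the constraint $s<_\lex t$ is automatically compatible with the hypothesis $u<_\lex v$.

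The next step is to isolate the only obstruction to the existence of such a $g$. Because every element of $F$ fixes $0$ and $1$ and is an increasing homeomorphism, the three complementary pieces $[0,\min I_s]$, the gap between $I_s$ and $I_t$, and $[\max I_t,1]$ are carried onto the corresponding three pieces for $I_u,I_v$; in particular each source piece is empty exactly when the matching target piece is empty. This yields three incidence attributes that $g$ must preserve: whether the left interval abuts $0$ (equivalently, $u$ is a string of $0$'s), whether the two intervals are adjacent, and whether the right interval abuts $1$ (equivalently, $v$ is a string of $1$'s). I claim conversely that whenever these three attributes agree for $(s,t)$ and for $(u,v)$, a suitable $g$ exists: one takes $g$ to be the prescribed affine map on $I_s$ and on $I_t$, and on each nonempty complementary piece one invokes the elementary fact that between any two intervals with dyadic endpoints there is an increasing dyadic-slope piecewise-linear bijection. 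Gluing these maps along their (dyadic) endpoints yields an element of $F$ with $s.g=u$ and $t.g=v$.

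It then remains to check that each of the $2^3=8$ attribute patterns is realized by some incompatible pair $s<_\lex t$ with $|s|,|t|\le 3$, after which one selects the representative whose attributes match those of $(u,v)$. This is a finite verification; representatives are easily listed, for instance $(\seq{0},\seq{1})$ for the adjacent-and-doubly-abutting pattern, $(\seq{01},\seq{10})$ for the adjacent-but-interior pattern, and $(\seq{01},\seq{110})$ for the pattern in which all three complementary pieces are nonempty. This last pattern is exactly where length $3$ becomes necessary: two intervals each of length at least $1/4$ cannot simultaneously leave a nonempty left gap, middle gap, and right gap, which is why the bound is $3$ and not $2$.

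I expect the crux to be the converse claim of the middle step — that agreement of the three incidence attributes genuinely suffices to construct $g\in F$ — since this is precisely where the flexibility of $F$ (its transitivity on dyadic intervals) is used. Once that is established, both the reduction to the geometric picture and the eight-case enumeration are routine.
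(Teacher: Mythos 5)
Your proposal is correct in substance, but there is nothing in the paper to compare it against: the paper states this proposition (together with Proposition \ref{relF}) as a ``standard property of $F$'' and gives no proof. Your route --- realize $F$ as the dyadic piecewise-linear homeomorphisms of $[0,1]$, observe that a pair of disjoint standard dyadic intervals carries exactly three relevant boundary-incidence attributes (abutting $0$, mutually adjacent, abutting $1$), check that all eight attribute patterns are realized by incompatible pairs of length at most $3$, and glue the two prescribed affine maps with increasing dyadic PL maps on the nonempty complementary pieces --- is the natural argument, and the gluing step is exactly the standard transitivity fact for $F$ (any two nondegenerate intervals with dyadic endpoints are related by an increasing PL map with dyadic breakpoints and power-of-two slopes; see \cite{Cannon}). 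In tree-diagram language the same construction reads: pad the trees generated by $\{s,t\}$ and by $\{u,v\}$ with subtrees having matching leaf-counts in the three complementary regions, which is possible precisely when the emptiness patterns agree. Your sharpness remark (length $2$ does not suffice) is also correct, though not needed.

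One inaccuracy is worth fixing. Your dictionary ``$s.g$ is defined precisely when $g$ is affine on $I_s$'' is false as stated: in the paper's sequence model, $s.g$ is defined iff $g$ maps the cone of sequences extending $s$ rigidly onto the cone of another word, i.e.\ iff $g$ is affine on $I_s$ \emph{and} $g(I_s)$ is again a standard dyadic interval. For instance, the element of $F$ equal to $t\mapsto t+1/4$ on $[0,1/2]$ and of slope $1/2$ on $[1/2,1]$ is affine on $I_{\seq{0}}$, yet $\seq{0}.g$ is undefined because $g(I_{\seq{0}})=[1/4,3/4]$ is not of the form $I_w$. This does not damage your argument: in your construction the images of $I_s$ and $I_t$ are the prescribed standard dyadic intervals $I_u$ and $I_v$, so the only direction of the dictionary you actually use --- affine on $I_s$ with image $I_u$ implies $s.g=u$ --- is valid. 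The failed ``only if'' direction enters only into your claim that the three attributes are the \emph{only} obstruction, which is motivation rather than a step needed for the existence statement being proved.
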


From these facts it follows that every relation in (\ref{yy}) is conjugate via an element of $F$ to a relation
in (\ref{yy}) indexed by sequences of length at most $3$.
The relations in (\ref{rw}) are conjugate via elements of $F$ to a relation
$y_s = x_s y_{s \seq{0}} y_{s \seq{10}}^{-1} y_{s \seq{11}}$ where $s\in \{\seq{0},\seq{10},\seq{1}\}$.
The relations in (\ref{xy}) can be expressed as $y_s g=g y_s$
where $s\in \{\seq{0},\seq{10},\seq{1}\}$ and $g\in F$ such that $s.g=s$.
These can be derived from relations $y_sx_t=x_ty_s$ where $s.x_t=s$ and
$s,t$ are binary sequences of length at most $3$.
In particular $G$ and $G_0$ are finitely presented.
In the case of $G_0$, one can check that the following list of $9$ relations actually suffice:
\[
x_{\seq{1}} x^{-2} x_{\seq{1}} x =  x^{-1} x_{\seq{1}} x x_{\seq{1}} x^{-1}
\qquad 
x_{\seq{1}} x^{-3} x_{\seq{1}} x^2 = x^{-2} x_{\seq{1}} x^2 x_{\seq{1}} x^{-1} 
\]
\[
y_{\seq{10}} x_{\seq{0}}   = x_{\seq{0}}   y_{\seq{10}} \qquad
y_{\seq{10}} x_{\seq{01}}  = x_{\seq{01}}  y_{\seq{10}} 
\]
\[
y_{\seq{10}} x_{\seq{11}}  = x_{\seq{11}}  y_{\seq{10}} \qquad
y_{\seq{10}} x_{\seq{111}} = x_{\seq{111}} y_{\seq{10}} 
\]
\[ 
y_{\seq{01}} y_{\seq{10}} = y_{\seq{10}} y_{\seq{01}} \qquad
y_{\seq{001}} y_{\seq{10}} = y_{\seq{10}} y_{\seq{001}}
\]
\[
y_{\seq{10}}=x_{\seq{10}}y_{\seq{100}}y^{-1}_{\seq{1010}}y_{\seq{1011}}.
\]
(Notice that all of the above relations except the last assert that
a pair of elements of the group commute.
In each case this is because they are supported on disjoint sets,
where the support $g$ is the set of $x$ such that $x.g \ne x$.)
When expressed in terms of the original generators, these become:
\[
b a^{-2} b a =  a^{-1} b a b a^{-1}
\qquad 
b a^{-1} a^{-2} b a^2 = a^{-2} b a^2 b a^{-1} 
\]
\[
c a^2 b^{-1} a^{-1} =  a^2 b^{-1} a^{-1} c \qquad
c b^2 a^{-1} b a b  =  b^2 a^{-1} b a b c 
\]
\[
c a^{-1} b a  = a^{-1} b a c \qquad
c a^{-2} b a^2 = a^{-2} b a^2 c 
\]
\[ 
c a c a^{-1} = a c a^{-1} c \qquad
c a^2 c a^{-2} = a^2 c a^{-2} c
\]
\[
c = b^2a^{-1}b^{-1}acb^{-2}ab^{-1}c^{-1}ba^{-1}bab^{-1}ab^{-1}cba^{-1}ba^{-1}.
\]

\section{Tree diagrams}

\label{tree_diag}

Before proceeding further, we will pause to describe how the elements of 
$\Seq{a,b,c}$ can be described
in terms of tree diagrams, similar to those associated to Thompson's group $F$.
This section is not essential for understanding the proof
of Theorem \ref{main}
in the subsequent section, although the reader may find the material here
is useful in visualizing what is happening in the main proofs.

Let $\tilde \Tcal$ denote the collection of all finite sets $S$ of reduced words in the alphabet
$\{\seq{0},\seq{1},y,y^{-1}\}$ which satisfy the following properties:
\begin{itemize}

\item $S$ is nonempty;

\item the result of deleting all occurrences of $y$ and $y^{-1}$ in members of $S$ defines a bijective
map between $S$ and an element of $\Tcal$;

\item if $u y^n$ is a prefix of some element of $S$, then any element of $S$ which has $u$ as a prefix, also
has $u y^n$ as a prefix;

\end{itemize}
Elements of $\tilde \Tcal$ be be visualized as follows.
Let $S$ be in $\tilde \Tcal$ and $T$ is the result of removing the occurrences of $y$ and $y^{-1}$ from
elements of $S$.
We can think of $T$ as defining a rooted ordered binary tree, whose vertexes correspond to the prefixes of elements of $T$.
The elements of $S$ can be specified by an assignment of an integer to each vertex of $T$.
For instance if $S = \{\seq{0},\seq{1yy0y^{-1}},\seq{1yy1}\}$, then the associated labeled tree is:
\[
\xy
(0,0);(4,4)**@{-};(12,-4)**@{-};(8,0); (4,-4)**@{-}; (10,2)*{2}; (3,-6)*{-1};
\endxy
\]
(here and below unspecified labels are 0).

A \emph{labeled tree diagram} is a pair $S \to T$ of elements of $\tilde \Tcal$ such that
$S$ and $T$ have the same number of vertexes.
The key point is now to define the appropriate notion of equivalence of tree diagrams.
First we define a notion of equivalence on $\tilde \Tcal$.
Two (possibly infinite) words in the alphabet $\{\seq{0},\seq{1},y,y^{-1}\}$ are equivalent if one can be converted
into the other by the following substitutions:
\[
y\seq{00} \Leftrightarrow \seq{0}y \qquad y\seq{01} \Leftrightarrow \seq{10}y^{-1} \qquad y\seq{1} \Leftrightarrow \seq{11}y
\]
\[
y^{-1}\seq{1} \Leftrightarrow \seq{11} y^{-1} \qquad y^{-1}\seq{10} \Leftrightarrow \seq{01} y \qquad y^{-1}\seq{11} \Leftrightarrow \seq{1} y^{-1}.
\]
Two elements of $\tilde T$ are equivalent if the sets of equivalence classes of their elements coincide.
In terms of labeled tree diagrams, this means that $S$ and $T$ are equivalent
if $T$ can be obtained from $S$ by a sequence of substitutions of the following form:
\[
\xy
(0,-6); (6,0)**@{-}; (12,6)**@{-}; (18,0)**@{-}; (6,0); (12,-6)**@{-};
(12,8)*{m};
(0,-9)*{i};
(12,-9)*{j};
(18,-3)*{k};
\endxy
\qquad
\Leftrightarrow
\qquad
\xy
(0,-6); (-6,0)**@{-}; (-12,6)**@{-}; (-18,0)**@{-}; (-6,0); (-12,-6)**@{-};
(-12,8)*{m-1};
(0,-9)*{i+1};
(-12,-9)*{j-1};
(-18,-3)*{k+1};\endxy
\]
In many simple computations, labels are either $0$, $1$ or $-1$.
In this case it is convenient to use $\bullet$ and $\circ$ to indicate the labels
$1$ and $-1$ respectively.
The substitution rule above then becomes a pair of substitutions:
$
\xy
(0,-2); (2,0)**@{-}; (3.5,1.5)**@{-}; (4.5,1.5); (6,0)**@{-}; (2,0); (4,-2)**@{-};
(4,2)*{\bullet};
\endxy
\Leftrightarrow
\xy
(0,-2); (-2,0)**@{-}; (-4,2)**@{-}; (-6,0)**@{-}; (-2,0); (-3.5,-1.5)**@{-};
(0,-2)*{\bullet};
(-4,-2)*{\circ};
(-6,0)*{\bullet};
\endxy
$ and
$
\xy
(0,-2); (-2,0)**@{-}; (-3.5,1.5)**@{-}; (-4.5,1.5); (-6,0)**@{-}; (-2,0); (-4,-2)**@{-};
(-4,2)*{\circ};
\endxy
\Leftrightarrow
\xy
(0.5,-1.5); (2,0)**@{-}; (4,2)**@{-}; (5.5,0.5)**@{-}; (2,0); (3.5,-1.5)**@{-};
(0,-2)*{\circ};
(4,-2)*{\bullet};
(6,0)*{\circ};
\endxy
$.
Notice that if $S$ and $T$ are equivalent elements of $\tilde T$, then $S$ and $T$ have
the same number of leaves.

Equivalence of labeled tree diagrams is generated by the equivalence of trees, together
with the following manipulations on tree diagrams:
\begin{itemize}

\item If $S \to T$ is a labeled tree diagram, then we can insert a caret below the $i\Th$ leaf of $S$
and below the $i\Th$ leaf of $T$ to produce an equivalent diagram $S' \to T'$.
The labels of the top vertexes of the new carets are the same as the original vertexes; the leaves of the
new carets are labeled $0$.

\item If $S \to T$ is a labeled tree diagram, then we may add $1$ to the label of the $i\Th$ leaves of $S$ and of
$T$ to produce an equivalent diagram $S' \to T'$.

\end{itemize}

If $S \to T$ is a labeled tree diagram and $S$ has no labels, then it describes
a continuous function $g:2^\Nbb \to 2^\Nbb$ as follows.
If $\xi$ is an infinite sequence in the alphabet $\{\seq{0},\seq{1},y,y^{-1}\}$ with only finitely many occurrences
of $y$ or $y^{-1}$, then define $\lim \xi$ to be the unique infinite binary sequence $\eta$ such that every prefix of $\eta$
occurs as a prefix of a sequence equivalent to $\xi$.
If $s_i$ and $t_i$ are the $i\Th$-least elements of $S$ and $T$ respectively in the lexicographic order, then define
$g(s_i \xi) = \lim t_i \xi$.
It is easy to check that the generators can be described as follows:
\[
a = (\ 
\xy
(0,-3); (3,0)**@{-}; (6,3)**@{-}; (6,3); (9,0)**@{-}; (3,0); (6,-3)**@{-};
\endxy
\to
\xy
(0,-3); (-3,0)**@{-}; (-6,3)**@{-}; (-6,3); (-9,0)**@{-}; (-3,0); (-6,-3)**@{-};
\endxy
\ )
\qquad
b =
(\ 
\xy
(3,-1.5); (6,1.5)**@{-}; (9,-1.5)**@{-};
(0,1.5); (3,4.5)**@{-}; (6,1.5)**@{-};
(0,-4.5); (3,-1.5)**@{-}; (6,-4.5)**@{-};
\endxy
\to 
\xy
(3,-1.5); (6,1.5)**@{-}; (9,-1.5)**@{-};
(0,1.5); (3,4.5)**@{-}; (6,1.5)**@{-};
(6,-4.5); (9,-1.5)**@{-}; (12,-4.5)**@{-};
\endxy
\ )
\qquad 
c =
(\ 
\xy
(0,-3); (-3,0)**@{-}; (-6,3)**@{-}; (-6,3); (-9,0)**@{-}; (-3,0); (-6,-3)**@{-};
\endxy
\to
\xy
(0,-3); (-3,0)**@{-}; (-6,3)**@{-}; (-6,3); (-9,0)**@{-}; (-3,0); (-6,-3)**@{-}; (-6,-3)*{\bullet}
\endxy
\ )
\]

In fact we can modify this definition slightly in order to associate a function to any labeled tree
diagram: define $g(\lim s_i \xi) = \lim t_i \xi$.
We leave it to the reader to verify that this is a well defined map.
The equivalence of tree diagrams is set up so as to capture exactly
when the corresponding functions coincide.
We will eventually see that the collection of all functions arising in this way is a group which then
coincides with the group $G$ of the previous section.
Notice that if $S \to T$ and $T \to U$ are labeled tree diagrams, then the composition of the two
functions associated to these diagrams is the same as that described by $S \to U$.
In particular $T \to S$ is the inverse of $S \to T$.

We will conclude this section with a illustrative computation.
Notice that $t \mapsto 2t$ correspond to the diagram
$
\xy
(0,-1); (2,1)**@{-}; (4,-1)**@{-};
\endxy
\to
\xy
(0.5,-0.5); (2,1)**@{-}; (3.5,-0.5)**@{-};
(0,-1)*{\circ};
(4,-1)*{\bullet};
\endxy
$.
Conjugating $t \mapsto t+1$ by $t \mapsto 2t$ yields $t \mapsto t+2$,
the square of the first map.
In terms of labeled tree diagrams, this computation can be carried out as follows:
\[
\big(
\xy
(0,-1.5); (3,1.5)**@{-}; (6,-1.5)**@{-};
\endxy
\to 
\xy
(0.5,-1); (3,1.5)**@{-}; (5.5,-1)**@{-}; (0,-1.5)*{\circ}; (6,-1.5)*{\bullet};
\endxy
\big)^{-1}
\cdot 
\big(
\xy
(0,-3); (3,0)**@{-}; (6,3)**@{-}; (9,0)**@{-}; (3,0); (6,-3)**@{-};
\endxy
\to
\xy
(0,0); (3,3)**@{-}; (6,0)**@{-}; (9,-3)**@{-}; (3,-3); (6,0)**@{-};
\endxy
\big)
\cdot 
\big(
\xy
(0,-1.5); (3,1.5)**@{-}; (6,-1.5)**@{-};
\endxy
\to
\xy
(0.5,-1); (3,1.5)**@{-}; (5.5,-1)**@{-}; (0,-1.5)*{\circ}; (6,-1.5)*{\bullet};
\endxy
\big) =\]
\[
\big(
\xy
(-3,-3); (-0.5,-0.5)**@{-}; (0.5,-0.5); (3,-3)**@{-};
(0.5,0.5); (3,3)**@{-}; (5.5,0.5)**@{-}; (0,0)*{\circ}; (6,0)*{\bullet};
\endxy
\to 
\xy
(0,-3); (3,0)**@{-}; (6,3)**@{-}; (9,0)**@{-}; (3,0); (6,-3)**@{-};
\endxy
\big)
\cdot 
\big(
\xy
(0,-3); (3,0)**@{-}; (6,3)**@{-}; (9,0)**@{-}; (3,0); (6,-3)**@{-};
\endxy
\to
\xy
(0,0); (3,3)**@{-}; (6,0)**@{-}; (9,-3)**@{-}; (3,-3); (6,0)**@{-};
\endxy
\big)
\cdot 
\big(
\xy
(0,0); (3,3)**@{-}; (6,0)**@{-}; (9,-3)**@{-}; (3,-3); (6,0)**@{-};
\endxy
\to
\xy
(3,-3); (6,0)**@{-}; (9,-3)**@{-};
(0.5,0.5); (3,3)**@{-}; (5.5,0.5)**@{-}; (0,0)*{\circ}; (6,0)*{\bullet};
\endxy
\big) =\]
\[
\xy
(-3,-3); (-0.5,-0.5)**@{-}; (0.5,-0.5); (3,-3)**@{-};
(0.5,0.5); (3,3)**@{-}; (5.5,0.5)**@{-}; (0,0)*{\circ}; (6,0)*{\bullet};
\endxy
\to 
\xy
(3,-3); (6,0)**@{-}; (9,-3)**@{-};
(0.5,0.5); (3,3)**@{-}; (5.5,0.5)**@{-}; (0,0)*{\circ}; (6,0)*{\bullet};
\endxy
=
\xy
(-3,-1.5); (-0.5,1)**@{-}; (0.5,1); (3,-1.5)**@{-}; (6,-4.5)**@{-}; (3,-1.5); (0,-4.5)**@{-};
(0.5,2); (3,4.5)**@{-}; (5.5,2)**@{-}; (0,1.5)*{\circ}; (6,1.5)*{\bullet};
\endxy
\to 
\xy
(3,-1.5); (6,1.5)**@{-}; (9,-1.5)**@{-};
(0.5,2); (3,4.5)**@{-}; (5.5,2)**@{-};
(0,-4.5); (3,-1.5)**@{-}; (6,-4.5)**@{-};
(0,1.5)*{\circ}; (6,1.5)*{\bullet};
\endxy
=
\xy
(0.5,-4); (3,-1.5)**@{-}; (6,1.5)**@{-}; (9,4.5)**@{-};
(3,-1.5); (5.5,-4)**@{-};
(6,1.5); (8.5,-1)**@{-};
(9,4.5); (11.5,2)**@{-};
(0,-4.5)*{\circ};
(6,-4.5)*{\bullet};
(9,-1.5)*{\circ};
(12,1.5)*{\bullet};
\endxy
\to
\xy
(-0.5,-4); (-3,-1.5)**@{-}; (-6,1.5)**@{-}; (-9,4.5)**@{-};
(-3,-1.5); (-5.5,-4)**@{-};
(-6,1.5); (-8.5,-1)**@{-};
(-9,4.5); (-11.5,2)**@{-};
(0,-4.5)*{\bullet};
(-6,-4.5)*{\circ};
(-9,-1.5)*{\bullet};
(-12,1.5)*{\circ};\endxy
\]
\[
\xy
(0,-4.5); (3,-1.5)**@{-}; (6,1.5)**@{-}; (9,4.5)**@{-};
(3,-1.5); (6,-4.5)**@{-};
(6,1.5); (9,-1.5)**@{-};
(9,4.5); (12,1.5)**@{-};
\endxy
\to
\xy
(0,-4.5); (-3,-1.5)**@{-}; (-6,1.5)**@{-}; (-9,4.5)**@{-};
(-3,-1.5); (-6,-4.5)**@{-};
(-6,1.5); (-9,-1.5)**@{-};
(-9,4.5); (-12,1.5)**@{-};
\endxy
=
\big(
\xy
(0,-3); (3,0)**@{-}; (6,3)**@{-}; (9,0)**@{-}; (3,0); (6,-3)**@{-};
\endxy
\to
\xy
(0,0); (3,3)**@{-}; (6,0)**@{-}; (9,-3)**@{-}; (3,-3); (6,0)**@{-};
\endxy
\big)^2
\]

\section{Sufficiency of the relations}
\label{sufficient}

In this section, we will prove that the relations in
$R$ and $R_0$ are sufficient to give presentations for $G$ and $G_0$.
We will use without proof that the relations in $R$ which only refer to the generators in $X$
give a presentation for $F$ (see \cite{Belk} \cite{Cannon}).
The strategy of the proof is as follows.
First, we will argue that any $S$-word can be put into a \emph{standard form} by applying the
relations.
Standard forms are not unique, but are organized in a way which better facilitates further
symbolic manipulations.
We will then define the notion of a \emph{sufficiently expanded} standard form, argue
that every standard form can be sufficiently expanded by applying the relations in $R$, and
that any sufficiently expanded standard form which
represents an element of $F$ is an $X$-word.

We will begin by defining some terminology.
In what follows, we will say that an $S$-word $\Omega_1$ is \emph{derived from} an $S$-word
$\Omega_0$ if it is the result of applying substitutions of the following forms:
\[
y_t^i x_s^{\pm1} \Rightarrow x_s^{\pm1} y_{t.x_s^{\pm1}}^i  \qquad
y_s \Rightarrow x_s y_{s\seq{0}} y_{s\seq{10}}^{-1} y_{s\seq{11}}
\]
\[
 y_u y_v \Leftrightarrow y_v y_u \qquad x^{i+j} \Leftrightarrow x^i x^j
\qquad y^{i+j} \Leftrightarrow y^i y^j 
\]
\[
\textrm{delete an occurrence of } y^i y^{-i}
\]
where $s,t,u,v \in 2^{<\Nbb}$ are such that $t.x_s$ is defined and $u$ and $v$ are incompatible,
and $i,j$ are nonzero integers of the same sign.
We will write this symbolically as $\Omega_0 \Rightarrow \Omega_1$.
Notice that each of these substitutions corresponds either to a relation in $R$ or to
a group-theoretic identity.
Also observe that only $S_0$-words can be derived from $S_0$-words.

\begin{defn}
An $S$-word $\Omega$ is in \emph{standard form} if
it is the concatenation of a $X$-word followed by a $Y$-word and
whenever $\Omega(i) = y_s^m$, $\Omega(j) = y_t^n$, and $s \subseteq t$,
then $j \leq i$.
We will write \emph{standard form} to mean an $S$-word in standard form.
The \emph{depth} of a standard form $\Omega$
is the least $l$ such that there is binary sequence
$s$ of length $l$ such that $y_s$ occurs in $\Omega$ (if $\Omega$ is an $X$-word, then
we say that $\Omega$ has infinite depth).
\end{defn}

Notice in particular that a given $y_s$ can occur at most once in a standard form (although
possibly with an exponent other than $\pm1$).
Observe that any group element which is expressible by a word in standard form allows us to describe the group
element via a labeled tree diagram in the sense of the previous section.

\begin{lem}\label{base_case}
For every $s \in 2^{<\Nbb}$ and every $l \in \Nbb$, there is a standard form
$\Omega$ which can be derived from $y_s^{\pm1}$ such that:
\begin{enumerate}

\item \label{x_local}
if $x_u$ occurs in $\Omega$, then $u$ extends $s$;

\item if $y_u$ occurs in $\Omega$, then $u$ extends $s$, has length at least $l$,
 and the exponent of $y_u$ is $\pm 1$;

\item if $y_u$ and $y_v$ occur in $\Omega$ and $u \ne v$, then $u$ and $v$ are
incompatible.

\end{enumerate}
\end{lem}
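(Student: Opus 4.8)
The plan is to fix $l$ and induct on $n=\max(0,\,l-\length{s})$. For the base case $n=0$, i.e.\ $\length{s}\geq l$, the single letter $y_s^{\pm1}$ is already in standard form and meets every requirement: no $x_u$ occurs, so (\ref{x_local}) is vacuous; the only $y$-index is $s$, which extends $s$, has length $\geq l$, and appears with exponent $\pm1$; and the third condition holds vacuously.

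For the inductive step $\length{s}<l$ I would first expand $y_s^{\pm1}$ by one level. For exponent $+1$ the substitution $y_s\Rightarrow x_sy_{s\seq{0}}y_{s\seq{10}}^{-1}y_{s\seq{11}}$ coming from (\ref{rw}) does this directly. For exponent $-1$ I would read (\ref{rw}) as $y_s^{-1}=y_{s\seq{11}}^{-1}y_{s\seq{10}}y_{s\seq{0}}^{-1}x_s^{-1}$ and then carry $x_s^{-1}$ to the front using the commutation move of (\ref{xy}); since $(s\seq{0}).x_s^{-1}=s\seq{00}$, $(s\seq{10}).x_s^{-1}=s\seq{01}$ and $(s\seq{11}).x_s^{-1}=s\seq{1}$, this produces $x_s^{-1}y_{s\seq{1}}^{-1}y_{s\seq{01}}y_{s\seq{00}}^{-1}$. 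In either case the three children are indexed by a prefix set below $s$; they are pairwise incompatible, each extends $s$ and has length at least $\length{s}+1$, each carries exponent $\pm1$, and the single $x$-index is $s$.

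Now I would apply the inductive hypothesis to each of the three children $c_1,c_2,c_3$, each taken with its sign: because $\length{c_i}\geq\length{s}+1$ we have $\max(0,\,l-\length{c_i})<n$, so there is a standard form $\Omega_i=W_iV_i$, with $W_i$ an $X$-word and $V_i$ a $Y$-word, derived from $y_{c_i}^{\pm1}$, all of whose $x$-indices extend $c_i$ and all of whose $y$-indices extend $c_i$, have length $\geq l$, exponent $\pm1$, and are pairwise incompatible. Substituting these into the one-level expansion yields $x_s^{\pm1}W_1V_1W_2V_2W_3V_3$, and to recover standard form I would migrate $W_2$ and then $W_3$ leftward past the intervening $Y$-words by (\ref{xy}). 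The key observation, and the engine of the whole argument, is that the three children occupy pairwise incompatible cones: an $x$-letter $x_w$ in $W_i$ has $w$ extending $c_i$, whereas a $y$-letter $y_v$ in some $V_j$ with $j\neq i$ has $v$ extending $c_j$, and incompatibility of $c_i$ with $c_j$ forces $v$ not to extend $w$, so $v.x_w^{\pm1}=v$ and the commutation $y_v^m x_w^{\pm1}\Rightarrow x_w^{\pm1}y_v^m$ leaves every $y$-index untouched. Hence the $X$-subwords collect at the front as $x_s^{\pm1}W_1W_2W_3$ while the $Y$-subwords line up as $V_1V_2V_3$ with no index changed.

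It then remains to read off the invariants for $x_s^{\pm1}W_1W_2W_3\,V_1V_2V_3$. Each $x$-index is $s$ or extends some $c_i$, hence extends $s$, giving (\ref{x_local}); each $y$-index lies in some $V_i$, hence extends $c_i$ and so extends $s$, has length $\geq l$, and exponent $\pm1$, giving the second condition; and since distinct children lie in incompatible cones while the $y$-indices inside each $V_i$ are already pairwise incompatible, all $y$-indices are pairwise incompatible, giving the third. This last fact also makes the ordering clause in the definition of standard form vacuous, so the word really is a standard form. I expect the main obstacle to be precisely the reassembly step: one must check that commuting the $X$-subwords of one child past the $Y$-subwords of another never alters an index, which is exactly where incompatibility of the cones is used, and one must handle the bookkeeping of the negative-exponent expansion carefully so that the derivation stays among the permitted moves.
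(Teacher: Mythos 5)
Your proof is correct and follows essentially the same route as the paper: induction on $l-\length{s}$, a one-level expansion via relation (\ref{rw}), application of the inductive hypothesis to the three children, and collection of the $X$-letters at the front using commutations $y_v x_u \Rightarrow x_u y_v$ across incompatible indices. Your explicit treatment of the negative case---inverting (\ref{rw}) and pushing $x_s^{-1}$ leftward to obtain $x_s^{-1}y_{s\seq{1}}^{-1}y_{s\seq{01}}y_{s\seq{00}}^{-1}$---is exactly the substitution $y_s^{-1} \Rightarrow x_s^{-1} y_{s\seq{00}}^{-1} y_{s\seq{01}} y_{s\seq{1}}^{-1}$ that the paper invokes without derivation, so if anything your version is slightly more complete.
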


\begin{proof}
The proof is by induction on $l-|s|$.
If $l -|s| = 0$, there is nothing to do since $y_s$ already satisfies the conclusion of the lemma.
If $l -|s| > 0$, then $y_s \Rightarrow x_s y_{s\seq{0}} y_{\seq{10}}^{-1} y_{\seq{11}}$ and we can apply
the induction hypothesis to obtain $\Omega_{s\seq{0}}$, $\Omega_{s\seq{10}}$, $\Omega_{s\seq{11}}$
which satisfy the conclusion of the lemma for $y_{s\seq{0}}$, $y_{s\seq{10}}^{-1}$, and
$y_{s\seq{11}}$ respectively for the same value of $l$.
By conclusion \ref{x_local} of the lemma, we can apply substitutions of the form
$y_v x_u \Rightarrow x_u y_v$ for incompatible $u$ and $v$ move the occurrence of $x_u$ in
\[
x_s \Omega_{s \seq{0}} \Omega_{s \seq{10}} \Omega_{s \seq{11}}
\]
to the left, placing the word a standard form which satisfies the conclusions of the lemma. 
The case of $y_s^{-1}$ is handled similarly using the substitution 
$y_s^{-1} \Rightarrow x_s^{-1} y_{s\seq{00}}^{-1} y_{s \seq{01}} y_{s \seq{1}}^{-1}$.
\end{proof}

\begin{lem} \label{YXtoXY}
If $\Xi$ is an $X$-word, then there is an $l_0$ such that if $\Omega$ is a standard
form of depth $l \geq l_0$, then $\Omega \Xi \Rightarrow \Omega'$ for some standard form $\Omega'$
of depth at least $l-k$ where $k$ is the word length of $\Xi$. 
\end{lem}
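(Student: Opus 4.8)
We need to show: given an $X$-word $\Xi$ of word length $k$, and a standard form $\Omega$ of sufficiently large depth $l$, we can rewrite $\Omega\Xi$ (via the allowed derivations) into a standard form $\Omega'$ of depth at least $l-k$.

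**The key mechanism.** The allowed derivation $y_t^i x_s^{\pm 1} \Rightarrow x_s^{\pm 1} y_{t.x_s^{\pm 1}}^i$ lets us commute an $X$-generator leftward past a $Y$-generator, at the cost of changing the subscript $t \mapsto t.x_s^{\pm 1}$. The whole proof is about controlling how much this operation can decrease the depth.

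**The plan.** First I would reduce to the case $k=1$, i.e. $\Xi = x_s^{\pm 1}$ a single generator: the general statement then follows by an easy induction on $k$, using that each single $X$-generator decreases depth by at most $1$ and noting the sum telescopes (the word length of $x_s^n$ is $|n|$, so I would further reduce to $\Xi = x_s^{\pm 1}$ and iterate). For the single-generator case, I would write $\Omega = \Xi_0 \Upsilon$ where $\Xi_0$ is the leading $X$-word and $\Upsilon = y_{t_1}^{i_1}\cdots y_{t_r}^{i_r}$ is the trailing $Y$-word (ordered as in the standard-form definition). I then push $x_s^{\pm 1}$ from the right leftward through each $y_{t_j}^{i_j}$ using the substitution, replacing each subscript $t_j$ by $t_j . x_s^{\pm 1}$. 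Once $x_s^{\pm 1}$ reaches the $X$-block $\Xi_0$, the result $\Xi_0 x_s^{\pm 1}$ is again an $X$-word, so the concatenation with the transformed $Y$-block is again of the form (\hbox{$X$-word})(\hbox{$Y$-word}).

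**Why the depth is controlled, and the main obstacle.** The crux is a purely combinatorial fact about the map $t \mapsto t.x_s^{\pm 1}$ on binary sequences: applying $x_s^{\pm 1}$ can shorten the length of a sequence $t$ by at most $1$ (one looks at the explicit three-case definition of $x$, where the output of each branch has length one less, equal to, or one more than the relevant portion of the input). Hence if $y_t$ has $|t| \geq l$, then after the substitution $y_{t.x_s^{\pm 1}}$ has subscript of length at least $l-1$, and after passing through a whole $X$-word of length $k$ the length drops by at most $k$. This is exactly where the hypothesis ``$l \geq l_0$'' is used: I would take $l_0$ large enough (e.g. $l_0 > $ the maximal length of the subscripts $s$ occurring in $\Xi$, so that $t.x_s$ is always defined and $t$ genuinely extends $s$ throughout the process), guaranteeing every commutation step is legal and the subscript-shortening bound applies uniformly. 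The main obstacle is not the depth bookkeeping but \textbf{re-establishing the standard-form ordering condition}: after the subscripts $t_j$ are all transformed to $t_j.x_s^{\pm 1}$, the new $Y$-word need not satisfy the requirement that comparable subscripts appear in the correct order. I would handle this by observing that $t \mapsto t.x_s^{\pm 1}$ is order-preserving for the extension relation $\subseteq$ among sequences that all extend $s$ (it is, being induced by a homeomorphism that permutes cylinder sets coherently), so the relative $\subseteq$-comparabilities among the $t_j$ are preserved; where the transformation creates new incompatibilities one simply reorders using the commutation relation $y_u y_v \Leftrightarrow y_v y_u$ (relation~(\ref{yy})), which is permitted precisely for incompatible $u,v$. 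Verifying that these reorderings suffice to restore standard form — and that the reordering does not itself require generators of smaller depth — is the technical heart, and I expect it to be the step demanding the most care.
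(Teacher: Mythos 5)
Your proposal is correct and follows essentially the same route as the paper's proof: reduce to a single generator $x_s^{\pm 1}$, push it leftward through the $Y$-block via the substitution $y_t^i x_s^{\pm 1} \Rightarrow x_s^{\pm 1} y_{t.x_s^{\pm 1}}^i$, use the fact that $t.x_s^{\pm 1}$ is defined and has length differing from that of $t$ by at most $1$ once $|t|$ is large enough, and handle a general $\Xi$ by induction on its word length. The only real difference is that the paper treats what you call the ``technical heart'' as immediate, and rightly so: since $x_s^{\pm 1}$ acts bijectively on cylinder sets, prefix relations \emph{and} incompatibilities among the subscripts are preserved exactly (no new incompatibilities can arise), so the standard-form ordering condition survives the substitutions automatically and no reordering by commutation is ever needed.
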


\begin{proof}
If $\Xi = x_s^{\pm1}$, then observe that if $t$ is a finite binary sequence of length at least $l+2$,
then  $t . x_s^{\pm1}$ is defined and its length differs from
the length of $\Omega$ by at most $1$.
Thus by repeated applications of the substitution $y_t^i x_s^{\pm1} \Rightarrow x_s^{\pm1} y^i_{t.x_s^{\pm1}}$,
the final occurrence
of $x_s$ in $\Omega x_s^{\pm1}$ can be moved to the left of all occurrences of a $y_t$.
This results in a standard form in which the depth is changed by at most $1$.
The general case now follows by induction.
\end{proof}

\begin{lem}\label{stdform}
If $\Omega$ is any $S$-word and $l \in \Nbb$, then $\Omega \Rightarrow \Omega'$ for some standard form
$\Omega'$ of depth at least $l$.
\end{lem}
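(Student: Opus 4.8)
The plan is to induct on the number of syllables of $\Omega$, fixing $l$ throughout and processing $\Omega$ one leading syllable at a time. Thus I write $\Omega = w \cdot \Omega^*$ where $w$ is the first syllable, and suppose inductively that $\Omega^* \Rightarrow \Psi$ for some standard form $\Psi$ of depth at least $l$; write $\Psi = \Xi \Upsilon$ with $\Xi$ an $X$-word and $\Upsilon$ a $Y$-word. At the outset I would normalize so that every $Y$-syllable of $\Omega$ has exponent $\pm 1$: splitting $y_s^{m}$ into $|m|$ copies of $y_s^{\mathrm{sign}(m)}$ is a group-theoretic identity, and this is the form in which Lemma~\ref{base_case} applies. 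The base case, $\Omega^*$ empty, is trivial, the empty word being a standard form of infinite depth.

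If $w = x_s^{\pm m}$ is an $X$-syllable, there is nothing to do: $x_s^{\pm m}\Xi$ is again an $X$-word, so $x_s^{\pm m}\Xi \Upsilon$ is already a concatenation of an $X$-word with the $Y$-word $\Upsilon$, and since the $Y$-syllables are exactly those of $\Upsilon$, the nesting condition and the depth are inherited unchanged from $\Psi$. The interesting case is $w = y_s^{\pm 1}$. Here I would first apply Lemma~\ref{base_case} to replace $y_s^{\pm1}$ by a standard form $\Theta = \Xi_\Theta \Upsilon_\Theta$ all of whose indices extend $s$, whose $Y$-indices have length at least some large $L$ (to be chosen) and are pairwise incompatible. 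This gives $\Omega \Rightarrow \Xi_\Theta \Upsilon_\Theta \Xi \Upsilon$. Now $\Upsilon_\Theta$ is itself a standard form (its $Y$-indices being pairwise incompatible, the nesting condition is vacuous), so Lemma~\ref{YXtoXY} lets me push the $X$-word $\Xi$ leftward through it: $\Upsilon_\Theta \Xi \Rightarrow \Xi_\Sigma \Upsilon_\Sigma$ for a standard form of depth at least $L - k$, where $k$ is the word length of $\Xi$ and $\Upsilon_\Sigma$ is obtained from $\Upsilon_\Theta$ by replacing each index $t$ with $t.\Xi$. Absorbing $\Xi_\Sigma$ into $\Xi_\Theta$, I arrive at $\Xi_\Theta \Xi_\Sigma \cdot \Upsilon_\Sigma \Upsilon$, a concatenation of an $X$-word with the $Y$-word $\Upsilon_\Sigma \Upsilon$.

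The crux is to verify that $\Upsilon_\Sigma \Upsilon$ satisfies the nesting condition and has depth at least $l$, and this is where the freedom in choosing $L$ is used. Since $\Psi$ is a fixed, finite word once the induction hypothesis has been invoked, let $M$ be the maximum length of an index occurring in $\Upsilon$; choose $L$ larger than $M + k + l$ and at least the threshold $l_0$ of Lemma~\ref{YXtoXY} for $\Xi$. Then the $Y$-indices of $\Upsilon_\Sigma$ all have length at least $L - k > M$, remain pairwise incompatible (an element of $F$ acts injectively and preserves incompatibility on sequences this long), and exceed the length of every index of $\Upsilon$. Consider any two $Y$-indices $u \subseteq v$ occurring in $\Upsilon_\Sigma \Upsilon$. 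If both lie in $\Upsilon_\Sigma$ this is impossible by incompatibility; if both lie in $\Upsilon$ their relative order is inherited from $\Psi$ and is already correct; if $u \in \Upsilon_\Sigma$ and $v \in \Upsilon$ then $|u| > M \geq |v|$ forbids $u \subseteq v$; and if $u \in \Upsilon$ while $v \in \Upsilon_\Sigma$, then $y_v$ already stands to the left of $y_u$, which is the required order. Hence $\Upsilon_\Sigma \Upsilon$ is automatically in standard form with no further commutations, and its depth is $\min(L-k, \operatorname{depth}\Psi) \geq l$.

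I expect the main obstacle to be precisely this bookkeeping of lengths: the reason one cannot simply append the expanded block is that the nesting condition must be repaired using only commutations of incompatible indices, which are powerless against a pair $u \subseteq v$. Choosing the expansion depth $L$ \emph{after} $\Psi$ has been produced — so that the newly created indices are longer than anything already present, and hence each is incompatible with or a proper extension of everything in $\Upsilon$ — is what makes the concatenation standard for free. The only other points requiring care are that $L \geq l_0$ keeps Lemma~\ref{YXtoXY} applicable and that the depth loss $k$ incurred there is absorbed into the generous choice of $L$; both are arranged by the single inequality on $L$, and since $l$ never has to grow, the induction closes cleanly.
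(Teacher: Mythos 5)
Your proof is correct and uses the same mechanism as the paper's: put the tail into standard form of depth $l$, expand the head to a much greater depth, push the tail's $X$-part through the head's $Y$-part via Lemma \ref{YXtoXY}, and let the resulting depth disparity make the concatenated word standard with no further commutations (your four-case nesting check is exactly the point the paper's last sentence relies on). The only difference is organizational: the paper splits $\Omega = \Omega_0\Omega_1$ arbitrarily and runs a double induction (on word length and then on $l$), invoking the induction hypothesis on $\Omega_0$ with the large depth target $m+k$, whereas you peel off a single leading syllable so that Lemma \ref{base_case}, with its freely chosen depth parameter $L$, plays that role and $l$ never has to vary.
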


\begin{proof}
The proof is by double induction: first on the word length $n$ of $\Omega$ and then on $l$.
The case $n=1$ is handled by Lemma \ref{base_case}.
By making a substitution of the form $a^{\pm(k+1)} \Rightarrow a^{\pm k} a^{\pm1}$ if necessary, we may assume
that $\Omega = \Omega_0 \Omega_1$ where $\Omega_i$ is an $S$-word of positive length.
By our induction hypothesis $\Omega_1 \Rightarrow \Xi \Upsilon$,
where $\Xi$ and $\Upsilon$ are $X$- and $Y$-words respectively and $\Upsilon$ has depth $l$.
Let $k$ be the word length of $\Xi$ and let $m \geq l$ be such that if $y_u$ occurs in $\Upsilon$,
$u$ has length less than $m$.
By our induction hypothesis, there is a standard form $\Omega_0'$ of depth at least $m+k$ such
that $\Omega_0 \Rightarrow \Omega_0'$.
By Lemma \ref{YXtoXY} we have that $\Omega_0' \Xi \Rightarrow \Omega_0''$ for some standard form
$\Omega_0''$ of depth at least $m$,
we have:
\[
\Omega \Rightarrow \Omega_0 \Omega_1 \Rightarrow \Omega_0 \Xi \Upsilon \Rightarrow
\Omega_0' \Xi  \Upsilon \Rightarrow \Omega_0'' \Upsilon
\]
Finally, notice that since the depth of $\Omega_0''$ is at least $m$, $\Omega' = \Omega_0'' \Upsilon$
is a standard form of depth at least $l$, as desired.
\end{proof}

If $\Omega$ is standard form and $y_s$ occurs in $\Omega$, we say that $s$
is \emph{exposed in $\Omega$} if there is a finite binary sequence $u$ extending
$s$ such that if $t$ is a binary sequence compatible with $u$ and $y_t$ occurs in $\Omega$,
then $t$ is an initial part of $s$.

\begin{defn}
A standard form $\Omega$ is \emph{sufficiently expanded} if whenever $y_s$ occurs in $\Omega$
and $s$ is not exposed in $\Omega$, then:
\begin{itemize}

\item $y_{s \seq{0}}$ occurs in $\Omega$ if $y_s$ occurs positively in $\Omega$;

\item $y_{s \seq{1}}$ occurs in $\Omega$ if $y_s$ occurs negatively in $\Omega$.

\end{itemize}
\end{defn}

The motivation for this definition is as follows.
Suppose that $\Omega$ is a standard form which is not sufficiently expanded and that
this is witnessed by $\Omega(i) = y_s^n$ for $n > 0$.
If we substitute
\[
x_s y_{s \seq{0}} y_{s \seq{10}}^{-1} y_{s \seq{11}} y_s^{n-1}
\]
for $y_s^n$ in $\Omega$, then whenever $y_t$ occurs before $y_s$
in $\Omega$, $t . x_s$ is defined.
A similar conclusion holds --- with $x_s^{-1}$ replacing $x_s$ --- if $n < 0$ and the substitution
\[
x_s^{-1} y_{s \seq{00}}^{-1} y_{s \seq{01}} y_{s \seq{1}}^{-1} y_s^{n+1}
\]
is applied.
This plays an important role in the proof of the next lemma.

\begin{lem}\label{expanded}
If $\Omega$ is a standard form, then there is a sufficiently expanded standard form
which can be derived from $\Omega$.
\end{lem}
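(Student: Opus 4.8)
The plan is to rewrite $\Omega$ by repeatedly removing a single witness to the failure of sufficient expansion, in the manner suggested by the motivating remarks, and then to argue that this process must terminate. Call $y_s$ a \emph{violation} in a standard form if $s$ is not exposed but the child demanded by the definition fails to occur (that is, $y_{s\seq{0}}$ is absent while $y_s$ occurs positively, or $y_{s\seq{1}}$ is absent while $y_s$ occurs negatively). If there are no violations then $\Omega$ is already sufficiently expanded. Otherwise, suppose $\Omega(i) = y_s^n$ with $n > 0$ is a violation, the case $n < 0$ being symmetric via the substitution $y_s^{-1} \Rightarrow x_s^{-1} y_{s\seq{00}}^{-1} y_{s\seq{01}} y_{s\seq{1}}^{-1}$. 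I would replace $y_s^n$ by $x_s\, y_{s\seq{0}}\, y_{s\seq{10}}^{-1}\, y_{s\seq{11}}\, y_s^{n-1}$. The point is that the only extensions of $s$ on which $x_s$ is undefined are $s$ and $s\seq{0}$; since $s\seq{0}$ is precisely the missing child, no $y_t$ with $t = s\seq{0}$ occurs to the left of $y_s$, and every other $y_t$ occurring to its left has either $s \subseteq t$ (by the ordering in a standard form) or $t$ incompatible with $s$, so in all cases $t.x_s$ is defined. I may therefore carry $x_s$ to the front using $y_t^i x_s \Rightarrow x_s y_{t.x_s}^i$, then restore standard form by reordering the $Y$-part and amalgamating repeated generators with the commutation relation~(\ref{yy}) for incompatible indices. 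The resulting standard form contains $y_{s\seq{0}}$, so the violation at $s$ is gone.

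All the content of the lemma therefore lies in \emph{termination}, and this is the main obstacle. The difficulty is that transporting $x_s$ to the front relabels every generator $y_t$ with $t \supsetneq s$ according to $t \mapsto t.x_s$, which shuffles the support below $s$ and so could in principle manufacture new violations. The structural fact I would lean on is the explicit effect of this relabeling: it sends $s\seq{00}\zeta$, $s\seq{01}\zeta$, $s\seq{1}\zeta$ to $s\seq{0}\zeta$, $s\seq{10}\zeta$, $s\seq{11}\zeta$ respectively. Consequently each of the newly created children $s\seq{0}$, $s\seq{10}$, $s\seq{11}$ inherits its proper extensions from the part of the support lying strictly below $s\seq{00}$, $s\seq{01}$, $s\seq{1}$, so every violation created by the step lies strictly below $s$ and has a strictly shorter subtree of support beneath it. A second observation I would isolate and verify is that $x_s$ fixes every sequence not extending $s$ and acts as a homeomorphism of the sequences extending it; this should make the property of being exposed \emph{invariant} under the step, so that the relabeling can change which $y_u$ are violations only through the presence or absence of a demanded child, never through exposedness.

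Using these two facts I would set up a well-founded measure and argue that it strictly decreases. The natural candidate is the finite multiset $\mu(\Omega)$ whose elements are, for each violation $y_s$, the height $\max\{|t| - |s| : y_t \text{ occurs},\ s \subseteq t\}$ of the support below $s$, compared in the Dershowitz--Manna multiset order on $\Nbb$, which is well founded. The length bookkeeping above shows that the expansion step deletes the entry belonging to $s$ and introduces in its place only entries belonging to strictly deeper nodes, each of strictly smaller height; entries attached to nodes incomparable to $s$ are untouched because their support is not relabeled. The delicate point — and the place where I expect the bulk of the work to go — is confirming that no \emph{surviving or ancestral} violation has its height raised by the relabeling, in particular handling the borderline situation in which $s$ is itself the demanded child of some shorter non-exposed node and the factor $y_s^{n-1}$ vanishes. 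I would control this by exploiting the invariance of exposedness together with a careful choice of which violation to expand first (for instance always a violation of least length), so that each step is genuinely a replacement of one multiset entry by strictly smaller ones. Once monotonicity of $\mu$ is established, the process cannot continue forever, and it can only halt at a standard form with no violations, i.e.\ a sufficiently expanded standard form derived from $\Omega$.
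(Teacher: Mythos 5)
Your expansion step is exactly the paper's (substitute $y_s^n \Rightarrow x_s\, y_{s\seq{0}}\, y_{s\seq{10}}^{-1}\, y_{s\seq{11}}\, y_s^{n-1}$, push $x_s$ to the front, re-sort and merge the $Y$-part), so everything hinges on termination --- and there the proposal has a genuine gap: your multiset of violation heights can strictly \emph{increase} under the step, even when you expand a violation of least length and even though exposedness behaves exactly as you predict. Concretely, take $u = \seq{1}$, $s = u\seq{0} = \seq{10}$, and
\[
\Omega \;=\; y_{s\seq{00}}\, y_{s\seq{01}}^2\, y_{s\seq{10}}\, y_{s\seq{11}}\, y_s\, y_{u\seq{1}}\, y_u .
\]
This is a standard form whose only violation is at $s$: the four grandchildren block every direction below $s$, so $s$ is not exposed while $y_{s\seq{0}}$ is absent; $u$ is not a violation because its demanded child $y_{u\seq{0}} = y_s$ occurs. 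The height of this violation is $2$, so your measure is $\{2\}$. Expanding at $s$ (here $n = 1$, so the factor $y_s^{n-1}$ vanishes) relabels the grandchildren to $s\seq{0}, s\seq{10}, s\seq{110}, s\seq{111}$, and after merging with the newly created $y_{s\seq{0}}\, y_{s\seq{10}}^{-1}\, y_{s\seq{11}}$ one obtains
\[
\Omega'' \;=\; x_s\, y_{s\seq{110}}\, y_{s\seq{111}}\, y_{s\seq{11}}\, y_{s\seq{0}}^2\, y_{s\seq{10}}\, y_{u\seq{1}}\, y_u .
\]
Now $y_s$ no longer occurs, $u$ is still not exposed, and $y_{u\seq{0}}$ is absent, so $u$ has become a violation, of height $4$ (because $y_{s\seq{110}}$ occurs). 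Your measure jumps from $\{2\}$ to $\{4\}$. Note that the new violation sits strictly \emph{above} $s$, contradicting your claim that violations created by the step lie strictly below $s$; that exposedness of $u$ is indeed unchanged, so the ``invariance of exposedness'' fact cannot help; and that the least-length-first rule is vacuous here since $s$ was the unique violation. This is precisely the borderline case you flagged, and neither of your proposed fixes addresses it. A second, independent defect: your assertion that the resulting form ``contains $y_{s\seq{0}}$, so the violation at $s$ is gone'' can fail outright, since the relabeled occurrence of $y_{s\seq{00}}$ lands on $s\seq{0}$ and may cancel the new $y_{s\seq{0}}$ (e.g.\ if $y_{s\seq{00}}^{-1}$ occurs in $\Omega$); for $n > 1$ the violation at $s$ then survives the step, and the relabeling $s\seq{1}\cat\zeta \mapsto s\seq{11}\cat\zeta$ can even raise its height.

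The paper escapes all of this by using a measure that never mentions violations at all. It orders standard forms first by $|T(\Omega)|$, the size of the minimal prefix tree dominating the indices of the $y$'s, breaking ties by the absolute value of the exponent at the $\leq_\lex$-maximal index where the exponents of the two forms differ. Non-exposedness of $s$, together with the absence of $y_{s\seq{0}}$, guarantees that every element of $T(\Omega)$ extends one of $s\seq{00}, s\seq{01}, s\seq{1}$ or is incompatible with $s$; hence $t \mapsto t.x_s$ is defined on all of $T(\Omega)$ and maps it onto a prefix tree dominating $\Omega''$, so $|T|$ cannot grow. Moreover every exponent that changes sits at an extension of $s$, and in the paper's lexicographic convention proper extensions are $<_\lex s$, so $s$ is the maximal changed index and its exponent drops from $|n|$ to $|n|-1$. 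This decrease is insensitive to where violations appear, disappear, or cancel; in the example above, $|T|$ stays equal to $6$ and the exponent at $s$ drops from $1$ to $0$, so the paper's measure decreases exactly where yours increases. To repair your write-up you would need to replace the multiset of violation heights by a measure of this kind; as it stands, the termination claim --- which you correctly identify as the entire content of the lemma --- is not established.
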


\begin{proof}
We will prove the lemma by defining a \emph{well-founded} partial ordering $\triord$ on the set of standard forms 
and a notion of expansion on standard forms which are not sufficiently expanded in such a way
that produces a smaller standard form in this ordering.
Here a partial order is \emph{well-founded} if it has no infinite strictly decreasing sequences.
We will define the ordering first.

If $\Omega$ is an $S$-word, let $T(\Omega)$ denote the minimal prefix set which
has the property that if $y_t$ occurs in $\Omega$, then $t$ has an extension in $T$.
If $\Omega_0$ and $\Omega_1$ are standard forms, define $\Omega_0 \triord \Omega_1$ if
$|T(\Omega_0)| < |T(\Omega_1)|$ or $|T(\Omega_0)| = |T(\Omega_1)|$ and $|k_0| < |k_1|$ where
$k_i$ is the exponent in $\Omega_i$ of the $\leq_\lex$-maximal $s$ such that $y_s$ occurs in at least
one of $\Omega_0$ or $\Omega_1$ and for which $k_0 \ne k_1$
(if $y_s$ occurs in only one of the $\Omega$'s, then the other exponent is
$0$).
Notice that for a fixed $m$ there are only finitely many prefix sets of cardinality $m$.
In particular the collection $F$ of all finite binary sequences which have an extension in a prefix set
of cardinality $m$ is finite.
Since the lexicographic ordering on $\Nbb^F$ is a well-order, $\triord$ is well-founded.

Now suppose that $\Omega$ is a standard form which
is not sufficiently expanded as witnessed by $\Omega(i) = y_s^n$.
For simplicity, suppose that $n > 0$ and apply the substitution
\[
y_s^n \Rightarrow x_s y_{s \seq{0}} y_{s \seq{10}}^{-1} y_{s \seq{11}} y_s^{n-1}
\]
(if $n=1$, the $y_s^{n-1}$ term is omitted)
followed by substitutions of the form $y_t^m x_s \Rightarrow x_s y_{t.x_s}^m$ to move
$x_t$ to the left, forming a new word $\Omega'$ which is the concatenation
of a $X$-word followed by a $Y$-word.
At this point, the only thing preventing $\Omega'$
from being a standard form is the newly introduced occurrences of 
$y_{s \seq{0}}$, $y_{s \seq{10}}$, and $y_{s \seq{11}}$.
Observe that $y_{s \seq{1}}$ can not occur in $\Omega'$;
for this to happen, $s \seq{1}$ would have to equal $t.x_s$
for some $t$ such that $t.x_s$ is defined, and such a $t$ does not exist.
Furthermore, if $y_t$ occurs in $\Omega'$ and $t$ properly extends one of the sequences
$s \seq{0}$, $s \seq{10}$, or $s \seq{11}$, then $y_t$ must occur before any occurrence
of $y_{s \seq{0}}$, $y_{s \seq{10}}$, or $y_{s \seq{11}}$ in $\Omega'$.
Similarly, if $t$ is a proper initial part of $s \seq{0}$, $s \seq{10}$, and $s \seq{11}$ and $y_t$ occurs
in $\Omega'$, then $t$ is actually an initial part of $s$ and thus the occurrence is after the point of the
substitution.
We may therefore apply substitution of the form $y_u y_v \Leftrightarrow y_v y_u$
for incompatible $u$ and $v$
in order to move any two distinct occurrences
of $y_{s \seq{0}}$, $y_{s \seq{10}}$, or $y_{s \seq{11}}$ to the same position in $\Omega'$, resulting
in a word $\Omega''$ which is now in standard form.

It now suffices to show that $\Omega'' \triord \Omega$.
Since $s$ was not exposed in $\Omega$, each of $s \seq{00}$, $s \seq{01}$, and $s \seq{1}$ has
an extension in $T(\Omega)$; recall that, by assumption, $y_{s \seq{0}}$ does not occur in $\Omega$.
It follows that $t. x_s$ is defined for every element $t$ of $T(\Omega)$ and that
$T(\Omega') = \{t . x_s  : t \in T(\Omega)\}$.
Hence $T(\Omega)$ and $T(\Omega')$ have the same cardinality.
Notice that if $y_s$ occurs in $\Omega''$, it occurs in $\Omega'$ and hence
$T(\Omega')$ dominates $T(\Omega'')$.
It follows
that $T(\Omega'')$ has cardinality at most that of $T(\Omega)$.
If $T(\Omega'')$ has the same cardinality as $T(\Omega)$, then $s$ is the $\leq_\lex$-maximal sequence
such that the exponent of $y_s$ in $\Omega$ and $\Omega''$ differs and in this 
case, it decreases by one in absolute value.
Thus we have shown $\Omega'' \triord \Omega$.
\end{proof}

Let $B$ denote the set $\{\seq{0},\seq{1},y,y^{-1}\}$ and let $B^{<\Nbb}$ denote
the collection of all finite strings of elements of $B$.
If $\Lambda$ is in $B^{<\Nbb}$ and $\Lambda(i)$ is either $y$ or $y^{-1}$, we will
say that $\Lambda(i)$ is an \emph{occurrence of $y^\pm$}.
We will use $B$-words to analyze the evaluation of standard forms at binary sequences.
The following symbolic manipulations correspond to the recursive definition
of the function $y:2^{\Nbb} \to 2^{\Nbb}$.

\begin{defn}
Suppose that $\Lambda$ is in $B^{<\Nbb}$.
An application of one of the substitutions
\[
y\seq{00} \Rightarrow \seq{0}y 
\qquad
y\seq{01} \Rightarrow \seq{10}y^{-1}
\qquad
y\seq{1} \Rightarrow \seq{11}y
\]
\[
y^{-1} \seq{0} \Rightarrow \seq{00}y^{-1}
\qquad
y^{-1} \seq{10} \Rightarrow \seq{01}y
\qquad
y^{-1} \seq{11} \Rightarrow \seq{1}y^{-1}
\]
at an occurrence of $y^\pm$ is said to \emph{advance} that symbol.
If several advances of occurrences of $y^{\pm}$ are applied to $\Lambda$,
resulting in $\Lambda'$,
then we say that $\Lambda$ \emph{can be advanced to} $\Lambda'$, denoted
$\Lambda \Rightarrow \Lambda'$.
\end{defn}

\begin{defn}
Suppose that $\Lambda$ is in $B^{<\Nbb}$.
An occurrence of $y^\pm$ is a \emph{potential cancellation}
in $\Lambda$ if repeatedly advancing it results in an occurrence of the substring
$y y^{-1}$ or $y^{-1} y$ in the modified word.
\end{defn}

\begin{lem} \label{potential_cancelation}
Suppose that $\Lambda$ is in $B^{<\Nbb}$ and
contains no potential cancellations.
Then advancing any occurrence of a $y^{\pm}$ results in a word with no
potential cancellations.
\end{lem}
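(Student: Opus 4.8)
The plan is to understand precisely what happens to the "potential cancellation" structure when a single $y^\pm$ symbol is advanced. Recall that an occurrence of $y^\pm$ in $\Lambda$ is a potential cancellation if repeatedly advancing it eventually exposes a substring $y y^{-1}$ or $y^{-1} y$. The first observation I would record is that advancing is a \emph{local} operation on $B^{<\Nbb}$: the substitution at position $i$ replaces the pair $\Lambda(i), \Lambda(i+1)$ (where $\Lambda(i) \in \{y, y^{-1}\}$ and $\Lambda(i+1)$ is a binary digit, possibly with a second digit consumed) and shifts the $y^\pm$ symbol rightward past the binary digits, changing its sign according to the six rules. Crucially, advancing $\Lambda(i)$ does not disturb the relative order of the other $y^\pm$ occurrences among themselves, nor does it change which binary digits lie to the right of any \emph{other} $y^\pm$ occurrence except possibly for a digit that the advanced symbol has now moved past.

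\textbf{The main case analysis.} Let $\Lambda$ have no potential cancellations and suppose we advance an occurrence $\alpha$ of $y^\pm$ to obtain $\Lambda'$. I want to show no occurrence $\beta$ of $y^\pm$ in $\Lambda'$ is now a potential cancellation. I would split into cases according to the position of $\beta$ relative to $\alpha$. If $\beta$ is the advanced symbol $\alpha$ itself, then since advancing is just a step in the very process that \emph{defines} whether $\alpha$ is a potential cancellation, $\alpha$ is a potential cancellation in $\Lambda'$ if and only if it was one in $\Lambda$ --- and it was not, by hypothesis. If $\beta$ lies strictly to the left of $\alpha$ and the advance of $\alpha$ did not move it past any digit that $\beta$ would interact with, then the sequence of advances witnessing (non-)cancellation of $\beta$ is unaffected, because $\beta$'s forward evolution only ever consumes digits to its right, and the single advance of $\alpha$ only rearranged symbols at and to the right of $\alpha$'s original position. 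The delicate case is when $\beta$ is immediately adjacent to $\alpha$, since advancing $\alpha$ can bring $\beta$ into contact with a new binary prefix or, in the reverse direction, create a $yy^{-1}$ or $y^{-1}y$ juxtaposition.

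\textbf{The crux.} The main obstacle is precisely this adjacency case: I must rule out that advancing $\alpha$ creates a potential cancellation at a neighbouring $\beta$ that was not present before. Here the argument should run by contraposition. Suppose, toward a contradiction, that $\beta$ becomes a potential cancellation in $\Lambda'$. I would then show that the advance of $\alpha$ is a \emph{reversible} step --- the six substitution rules come in inverse pairs (each rule $y\seq{00} \Rightarrow \seq{0}y$ and so on has an evident inverse), so any configuration reachable from $\Lambda'$ by advancing is also reachable from $\Lambda$ by an appropriate interleaving of advances. The heart of the matter is a confluence / commutation property: advancing two distinct $y^\pm$ occurrences are operations that commute up to the order in which one applies them, because they act on disjoint or nested portions of the string and the rules are deterministic given the symbol being advanced and the digits immediately following it. Granting this commutation, any witness that $\beta$ is a potential cancellation in $\Lambda'$ can be transported back to a witness that the corresponding occurrence was already a potential cancellation in $\Lambda$, contradicting the hypothesis. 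I would therefore isolate and prove the commutation lemma first (that advancing distinct symbols yields confluent results and that an exposed $yy^{-1}$ or $y^{-1}y$ substring is detected independently of the order of advances), and then deduce the statement as an immediate consequence. I expect the bookkeeping in the commutation step --- tracking exactly which digits each advanced symbol consumes, especially when one symbol's advance feeds digits to the neighbour --- to be the genuinely technical part, while the overall logical skeleton is a short reversibility-plus-confluence argument.
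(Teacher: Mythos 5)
Your reduction agrees with the paper's first step: the advanced occurrence $\alpha$ cannot itself become a potential cancellation (further advances of it from $\Lambda'$ are just continuations of advances from $\Lambda$), and the only occurrence whose status can change is the $y^{\pm}$ occurrence immediately preceding $\alpha$, call it $\beta$. The gap is in how you dispose of that last case. The commutation property you propose to rely on --- that advances of distinct occurrences commute because they ``act on disjoint or nested portions of the string'' --- is false precisely where you need it. Every advance rule emits at least one binary digit to the \emph{left} of the advancing symbol ($y\seq{00} \Rightarrow \seq{0}y$ emits $\seq{0}$, $y\seq{01} \Rightarrow \seq{10}y^{-1}$ emits $\seq{10}$, and so on), and those emitted digits land exactly in the region between $\beta$ and $\alpha$ --- the region that $\beta$ consumes as it advances. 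Moreover, any witness that $\beta$ is a potential cancellation in $\Lambda'$ must end with $\beta$ adjacent to $\alpha$, so it necessarily consumes the newly emitted digits; such a sequence of advances of $\beta$ alone need not exist in $\Lambda$ at all, so it cannot be ``transported back.'' Concretely, take $\Lambda = y\seq{0}y\seq{00}$: the first $y$ is stuck in $\Lambda$ (no rule applies to $y\seq{0}$ followed by $y$), but after advancing the second $y$ one obtains $\Lambda' = y\seq{00}y$, in which the first $y$ \emph{can} advance, yielding $\seq{0}yy$. The order of the two advances cannot be interchanged. Your reversibility observation is true but does not help: undoing $\alpha$'s advance does not convert a $\beta$-only derivation in $\Lambda'$ into a $\beta$-only derivation in $\Lambda$.

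The consequence is that the entire content of the lemma sits inside what you set aside as ``bookkeeping'' --- the case where, as you put it, one symbol's advance feeds digits to its neighbour. That is not a technical wrinkle in a confluence argument; it is the whole problem, and no confluence statement survives it. The paper attacks it directly: first advance $\beta$ in $\Lambda$ as far as possible, and use the no-potential-cancellation hypothesis to pin down the resulting local configuration as one of $yy$, $y\seq{0}y$, $y\seq{0}y^{-1}$ (or their images under the symmetry $y \leftrightarrow y^{-1}$, $\seq{0} \leftrightarrow \seq{1}$); then advance $\alpha$ once and verify by direct computation --- nine short derivations --- that $\beta$ still cannot reach a substring $yy^{-1}$ or $y^{-1}y$. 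Note that in the example above the outcome $\seq{0}yy$ is indeed not a cancellation: the lemma is true, but the sub-lemma your skeleton rests on is false, so the proposal cannot be repaired without in effect carrying out this case analysis. As written, your proof does not establish the statement.
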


\begin{proof}
Suppose that $\Lambda$ is given and that the $i$th occurrence of $y^{\pm}$ is advanced to create $\Lambda'$.
The only possibility for introducing a potential cancellation is if $i > 1$ and a potential cancellation
occurs at the $i-1$st occurrence of a $y^{\pm}$ in $\Lambda'$.
Return to $\Lambda$ and advance the $i-1$st occurrence of $y^{\pm}$ as much as possible to produce $\Lambda''$.
Suppose for a moment that after advancing, this occurrence is a $y$; notice that the next symbol is either
$\seq{0}$ or $y$.
We now have the following cases:
\[
yy \seq{00} \Rightarrow y \seq{0} y
\]
\[
yy \seq{01} \Rightarrow y \seq{10} y^{-1} \Rightarrow \seq{11} y \seq{0} y^{-1}
\]
\[
yy \seq{1} \Rightarrow y \seq{11} y \Rightarrow \seq{1111} yy
\]
\[
y \seq{0} y \seq{00} \Rightarrow y \seq{00} y \Rightarrow 0 yy
\]
\[
y \seq{0} y \seq{01} \Rightarrow y \seq{010} y^{-1} \Rightarrow \seq{10} y^{-1} 0 y^{-1} \Rightarrow \seq{1000} y^{-1} y^{-1}
\]
\[
y \seq{0} y \seq{1} \Rightarrow  y \seq{011} y \Rightarrow \seq{10} y^{-1} \seq{1} y
\]

\[
y \seq{0} y^{-1} \seq{0} \Rightarrow y \seq{000} y^{-1} \Rightarrow \seq{0} y \seq{0} y^{-1}
\]
\[
y \seq{0} y^{-1} \seq{10} \Rightarrow y \seq{001} y \Rightarrow \seq{0} y \seq{1} y \Rightarrow \seq{011} y y
\]
\[
y \seq{0} y^{-1} \seq{11} \Rightarrow  y \seq{01} y^{-1} \Rightarrow \seq{10} y^{-1} y^{-1}
\]
The above lines list the possible contexts for two occurrences of $y^\pm$ in the $i-1$st and $i$th in $\Lambda''$
where the first occurrence is positive.
In the above cases, the $i$th occurrence of $y^\pm$ is advanced in $\Lambda''$
and then the $i-1$st occurrence is advanced as much as possible, demonstrating that a cancellation
does not occur.
The case in which the $i-1$st occurrence of $y^\pm$ in $\Lambda''$ is $y^{-1}$
is handled by symmetry ---
the rules for advancement and potential cancellation are invariant under the following
involution:
\[
y \Leftrightarrow y^{-1}
\qquad
\seq{0} \Leftrightarrow \seq{1}.
\]
\end{proof}

\begin{lem} \label{advance_yn}
Suppose that $\Lambda$ is in $B^{<\Nbb}$ and contains no potential cancellations.
There is a finite binary sequence $u$ such that $\Lambda \cat u$ can be advanced
to $s \cat y^{n}$ for some binary sequence $s$,
where $n$ is the number of occurrences of $y^\pm$ in $\Lambda$.
\end{lem}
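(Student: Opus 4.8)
The plan is to induct on $n$, the number of occurrences of $y^{\pm}$ in $\Lambda$, pushing the occurrences to the right end one at a time. Two features of advancing are used throughout. First, by Lemma \ref{potential_cancelation} advancing never creates a potential cancellation, and appending binary symbols at the right end of $\Lambda$ introduces none either; since the substitution rules neither create nor destroy occurrences of $y^{\pm}$, every word we produce from $\Lambda \cat u$ still has exactly $n$ occurrences. Second, because no $yy^{-1}$ or $y^{-1}y$ can ever appear, any block of adjacent occurrences that forms must consist of a single symbol repeated; so once all occurrences have been gathered at the right with no intervening binary symbols they automatically share a common sign, and it will only remain to arrange that this sign is positive.

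The base case is the statement for a single occurrence, which I would isolate as a sub-lemma: \emph{if $z \in \{y, y^{-1}\}$ and $w$ is a finite binary sequence, then some binary $u$ advances $z \cat w \cat u$ to $p \cat y$ with $p$ binary}. To prove it, advance $z$ through $w$ as far as possible. By inspection of the six rules, the only way to be blocked while binary symbols remain is to reach a configuration $y\seq{0}$ or $y^{-1}\seq{1}$ at the very end of the available sequence, each of which needs one more symbol of look-ahead; appending a single $\seq{0}$ resolves both and, via $y\seq{00} \Rightarrow \seq{0}y$ and $y^{-1}\seq{10} \Rightarrow \seq{01}y$, leaves the symbol in state $y$. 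If instead $z$ reaches the end already in state $y^{-1}$, appending $\seq{10}$ and using $y^{-1}\seq{10} \Rightarrow \seq{01}y$ converts it to $y$. (The number of cases is halved by the involution $y \leftrightarrow y^{-1}$, $\seq{0} \leftrightarrow \seq{1}$ noted in the proof of Lemma \ref{potential_cancelation}.) Thus a bounded append flushes one occurrence to a positive $y$ at the right end.

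For the inductive step, write $\Lambda = \sigma \cat z \cat w$ where $z$ is the rightmost occurrence and $w$ is binary. Using the sub-lemma, append symbols and advance $z$ all the way to the right, obtaining a word $\Psi \cat y$ in which $\Psi$ carries the remaining $n-1$ occurrences. Since $z$ was the rightmost occurrence and advancement only moves symbols to the right, each occurrence of $\Psi$ sees the same binary symbols to its right in $\Psi$ as in $\Psi \cat y$, up to the next occurrence; hence stripping the trailing $y$ cannot create a potential cancellation and $\Psi$ again has none. By the inductive hypothesis there is a binary room $r$ with $\Psi \cat r \Rightarrow s' \cat y^{n-1}$. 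If I can feed input at the far right so that the trailing $y$ delivers exactly $r$ to $\Psi$, I am done: choosing $u_1$ with $y \cat u_1 \Rightarrow r \cat y$ gives $\Psi \cat y \cat u_1 \Rightarrow \Psi \cat r \cat y \Rightarrow s' \cat y^{n-1} \cat y = s' \cat y^n$, where the flush of $\Psi$ through $r$ proceeds unchanged in the presence of the trailing $y$ (it never looks past $r$), and the first paragraph guarantees the resulting $n$-block is positive.

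The main obstacle is precisely this gluing step: a single $y$, read as a transducer, does not produce every binary string as output, so I cannot demand an arbitrary prescribed $r$. I would resolve it by exploiting that the trailing $y$ computes the doubling map $\phi(\xi.y) = 2\phi(\xi)$, which is a bijection; its symbolic output language — the outputs that return the symbol to state $y$ — is correspondingly rich, and the inductive hypothesis may be invoked for any admissible room. The technical heart is therefore to show that the family of rooms that flush $\Psi$ meets the output language of the doubling transducer, i.e.\ that among the inputs one may feed on the right there is one whose image under $y$ is a room for $\Psi$. I expect this to be the only genuinely delicate point; the remaining bookkeeping — that appending and advancing preserve the hypotheses, and that the final cluster carries the correct sign — is routine given Lemma \ref{potential_cancelation}.
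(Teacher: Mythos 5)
Your base case matches the paper's, and the bookkeeping in your first paragraph is sound: advancement preserves the no-potential-cancellation hypothesis by Lemma~\ref{potential_cancelation}, appending binary symbols on the right cannot create one (a potential cancellation depends only on the segment between consecutive occurrences of $y^\pm$, and appending only lengthens the segment after the last occurrence), and any block of adjacent occurrences must carry a single sign. The genuine gap is exactly the gluing step you flag, and it is created by the orientation of your induction. You split off the \emph{rightmost} occurrence, park it at the far right as a trailing $y$, and then need to feed a \emph{prescribed} room $r$ for $\Psi$ through that $y$, i.e.\ you need $u_1$ with $y \cat u_1 \Rightarrow r \cat y$. This fails for most $r$: reading $y$ as a transducer, its outputs that return it to state $y$ are precisely the concatenations of the blocks $\seq{0}$, $\seq{11}$, and $\seq{10}\, w\, \seq{01}$ with $w$ a concatenation of $\seq{00}$'s and $\seq{1}$'s; so, for instance, no input makes $y$ emit exactly $\seq{1}$, or exactly $\seq{10}$, and return to state $y$. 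Repairing this would force you to strengthen the induction hypothesis from ``some room exists'' to ``the set of rooms meets the output language of $y$,'' and establishing that richness of the set of rooms is essentially as hard as the lemma itself; your proposal names this as the technical heart but does not supply it.

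The paper avoids the problem by running the induction in the opposite direction, and that is the one missing idea. Apply the induction hypothesis to the suffix following the \emph{leftmost} occurrence, assembling the other $n$ occurrences into a block $y^n$ at the far right; then advance the leftmost occurrence toward that block. Since there are no potential cancellations, it either joins the block as a positive $y$, or it gets stuck in one of exactly two configurations, $y\seq{0}\,y^n$ or $y^{-1}\seq{1}\,y^n$. Now only the \emph{single symbol} $\seq{0}$ needs to be pushed leftward through the block, and that the block can do: appending $u=\seq{0}^{2^n}$ and iterating $y\seq{00}\Rightarrow\seq{0}y$ gives $y^n \seq{0}^{2^n} \Rightarrow \seq{0}\,y^n$, after which $y\seq{00}\Rightarrow\seq{0}y$ (resp.\ $y^{-1}\seq{10}\Rightarrow\seq{01}y$) completes the block to $y^{n+1}$. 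In short: a block of $y$'s cannot output an arbitrary prescribed string, which your gluing requires, but it can output $\seq{0}$, which is all the gluing ever needs once the stuck occurrence is on the left of the block rather than the right.
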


\begin{proof}
The proof is by induction on the number of occurrences of $y^\pm$ in $\Lambda$.
If there is only one occurrence, advance the occurrence as many times as possible,
resulting in $s \cat y$, $s \cat y^{-1}$, $s \cat y \cat \seq{0}$, or $s \cat y^{-1} \cat \seq{1}$
for some finite binary sequence $s$.
In the first case we are finished; in the
remaining cases, the choices $u = \seq{10}$, $u=\seq{0}$, and $u = \seq{0}$ work.
Now suppose that $\Lambda$ contains $n+1$ occurrences of $y^\pm$.
Induction and Lemma \ref{potential_cancelation} reduce the general case to
the two special cases
$y \seq{0} y^n$ and $y^{-1} \seq{1} y^n$.
In these cases, use $u = \seq{0}^{2^n}$, observing that $y^n \seq{0}^{2^n}$ can be advanced
to $\seq{0} y^n$.
\end{proof}

\begin{lem}\label{evaluation}
If $\Omega$ is a sufficiently expanded standard form
then either $\Omega$ is an $X$-word or else
$\Omega$ does not have the same evaluation as an $X$-word.
\end{lem}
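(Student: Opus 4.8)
The plan is to produce a single infinite sequence on which $\Omega$ acts differently from every $X$-word, exploiting one invariant: every element of $F$ preserves tail equivalence pointwise. Indeed, if $f$ is an $X$-word with tree pair $L\to R$ and $s_i\cat\eta$ is an input with $s_i\in L$, then $(s_i\cat\eta).f=t_i\cat\eta$ keeps the tail $\eta$, so $\xi$ and $\xi.f$ are tail equivalent for every $\xi$. Thus it suffices to exhibit one $\xi$ with $\xi.\Omega$ \emph{not} tail equivalent to $\xi$. Writing $\Omega=\Xi\Upsilon$ with $\Xi$ an $X$-word and $\Upsilon$ a nonempty $Y$-word, I would first reduce to $\Xi$ empty: since $\Xi$ evaluates into $F$ and $F$ is a group, $\Omega$ and $\Upsilon$ have the same evaluation as an $X$-word simultaneously, and the notions of standard form, exposed, and sufficiently expanded depend only on which $y_s$ occur. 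So assume $\Omega=\Upsilon$ is a nonempty, sufficiently expanded $Y$-word.

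Next I would locate a branch on which a nonzero net power of $y$ survives. Among the finitely many $s$ with $y_s$ occurring, pick one, $s_0$, that is $\subseteq$-minimal. Then descend: while the current node $s_i$ is not exposed, sufficient expansion supplies an occurring child, namely $y_{s_i\seq{0}}$ if $y_{s_i}$ occurs positively and $y_{s_i\seq{1}}$ if negatively; set $s_{i+1}$ to that child. By finiteness this terminates at an exposed node $s=s_k$; fix a witness $u\supseteq s$. On the cone of sequences extending $u$, exposure guarantees that the only $y_t$ that act are those with $t\subseteq s$; by minimality of $s_0$ and the unit-length steps of the descent, these are exactly the chain $s_0\subset\cdots\subset s_k=s$.

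On this cone, in deepest-first order, $\Omega$ acts as $y_s^{m_k}$ followed by the shallower $y_{s_{k-1}}^{m_{k-1}},\dots$ Encoding the action on the tail as a $B$-word and advancing, the decisive point is that the children were chosen in the sign-preserving direction: a positive $y$ advances through $\seq{0}$ without changing sign ($y\seq{00}\Rightarrow\seq{0}y$) and a negative one through $\seq{1}$ ($y^{-1}\seq{11}\Rightarrow\seq{1}y^{-1}$); because the branch turns $\seq{0}$ at positive nodes and $\seq{1}$ at negative ones, an advancing ancestor crosses a sign-changing pattern $\seq{01}$ or $\seq{10}$ exactly when the local sign changes, so it reaches $[u]$ aligned with $y_s$. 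Hence the $B$-word presenting $\Omega$ on $[u]$ is a binary string followed by $y^N$ and a tail, with $|N|=\sum_i|m_i|\ge 1$, and it has \emph{no potential cancellations}. By Lemma \ref{potential_cancelation} advancing never creates one, and by Lemma \ref{advance_yn} the word advances to the form $s'\cat y^N$; the $y$'s cannot be removed. Finally, evaluating at $u\cat\eta$ with $\phi$ of the relevant tail transcendental, the surviving $y^N$ scales this $\phi$-value by $2^N$ (by the Proposition giving $\phi(\zeta.y)=2\phi(\zeta)$); since a prefix change alters $\phi$ only by an element of $\PSL_2(\Zbb)$ and $t\mapsto 2^N t$ lies in $\PSL_2(\Zbb)$ for no $N\ne0$ (a transcendental cannot satisfy $2^N t=\frac{at+b}{ct+d}$ with integer unimodular $a,b,c,d$), the output is not tail equivalent to its input, so $\Omega$ disagrees with every $X$-word at $u\cat\eta$.

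The one genuinely delicate step — and the reason sufficient expansion is needed rather than mere finiteness — is the no-cancellation claim. Without the sign-consistent descent, shallow ancestors could advance into the cone with signs opposite to $y_s$ and annihilate it (as for $y_{\seq{01}}y$ on $[\seq{01}]$, where the evaluation collapses to an $X$-word), so the surviving power $N$ could vanish. The crux is therefore the sign bookkeeping verifying that every ancestor contribution stays aligned along the descent; Lemmas \ref{potential_cancelation} and \ref{advance_yn} are exactly the tools that convert this alignment into the persistence of the $y$'s, and hence into the required disagreement with $X$-words.
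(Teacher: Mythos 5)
Your proposal is correct and follows essentially the same route as the paper's proof: reduce to a nonempty $Y$-word, use sufficient expansion to descend a sign-consistent branch ($\seq{0}$ below positive occurrences, $\seq{1}$ below negative ones) until an exposed node is reached, encode the action on the resulting cone as a $B$-word with no potential cancellations, and invoke Lemmas \ref{potential_cancelation} and \ref{advance_yn} to extract a surviving power $y^N$ with $N\geq 1$, contradicting the fact that every $X$-word preserves tail equivalence. The only difference is the endgame: the paper evaluates at the explicit tail $\seq{0}^{2^n}\seq{1}\seq{0}^{2^n}\seq{1}\cdots$ and observes directly that the output is not tail equivalent to the input, whereas you take a tail whose $\phi$-value is transcendental and rule out tail equivalence because it would force $2^N t=(at+b)/(ct+d)$ with integers satisfying $ad-bc=\pm 1$ --- a valid, if slightly heavier, way to finish.
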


\begin{proof}
Notice that it is sufficient to prove the lemma when $\Omega$ is a sufficiently expanded standard form
which is a $Y$-word of positive length.
Suppose that such an $\Omega$ is given and let $g:2^\Nbb \to 2^\Nbb$ be the evaluation of
$\Omega$ in $G$.
It will be sufficient to construct finite binary sequences $u$ and $v$ such that
the value of $g$ at $u \cat \xi$ is
$v \cat y^n (\xi)$ for some $n > 0$.
This is because if $\xi = \seq{0}^{2^n} \seq{1} \seq{0}^{2^n} \seq{1} \ldots$, then
the value of $g$ at $u \cat \xi$ is $v \cat \seq{0}\seq{1}^{2^n}\seq{0}\seq{1^{2^n}} \ldots$, which is not tail
equivalent to $u \cat \xi$.

The finite binary sequence $u$ will be constructed by a recursive procedure.
Let $u \restriction i_0$ be the finite binary sequence
such that the last entry of $\Omega$ is a power of $y_{u \restriction i_0}$.
Suppose that $u \restriction i$ has been defined and that $y_{u \restriction i}$ occurs
in $\Omega$.
If $u \restriction i$ is exposed in $\Omega$, then let $u \restriction l$
be any finite binary sequence extending $u \restriction i$ which witnesses this.
Otherwise, define $u(i) = \seq{0}$ if $y_{u \restriction i}$ occurs positively in $\Omega$ and
$u(i) = \seq{1}$ if $y_{u \restriction i}$ occurs negatively in $\Omega$.
Define $\Lambda$ to be the result of simultaneously inserting $y^n$ after 
$s \restriction i$ whenever $y_{s \restriction i}^n$ occurs in $\Omega$.
Notice that by the choice of our sequence $u$,
$\Lambda$ does not contain potential cancellations: any occurrence of $y$ except for the final occurrence of $y^{\pm}$,
is
followed by $\seq{0} y^\pm$ and any occurrence of $y^{-1}$ except for the final occurrence of $y^{\pm}$ is followed by
$\seq{1} y^{\pm}$.
It follows from Lemma \ref{advance_yn} that there is a sequence $s$ such that
$\Lambda \cat s$ can be advanced to $v y^n$ for some binary sequence $v$,
where $n$ is the number of occurrences of $y^{\pm}$ in $\Lambda$
(this number coincides with the number of steps of the recursive procedure above, which is at least 1).
Set $u$ to be the concatenation of $u \restriction l$ followed by $s$.

Recall now that we have assumed that $\Omega$ is a $Y$-word;
$g = y_{t_k}^{n_k} \ldots y_{t_1}^{n_1}$.
\[
\xi . g = \xi . (y_{t_1}^{n_1} \cdots y_{t_k}^{n_k}) = (\cdots (\xi . y_{t_1}^{n_1}). \cdots ). y_{t_k}^{n_k}
\]
Let $\xi_i$ be the result of applying the $y_{t_1}^{n_1} \cdots y_{t_{i-1}}^{n_{i-1}}$ to $\xi$.
Observe that if $t_{i+1}$ is an initial part of $\xi$, then it is still an initial part of $\xi_i$.
This follows from the fact that if $j \leq i$, then $t_j$ either extends $t_{i+1}$ or else is incompatible with
$t_{i+1}$.
In particular, if $t_{i+1}$ is not an initial part of $\xi$, then $\xi_{i+1} = \xi_i$.
If $t_{i+1}$ is an initial part of $\xi_i$, then $\xi_{i+1} = t_{i+1} \cat y^{n_i} (\eta_i)$, where
$\xi_i = t_i \cat \eta_i$.
It follows from $\Lambda \cat s \Rightarrow v y^n$ that $\xi.g = v \cat (\eta.y^n)$, where
$\eta$ is such that $\xi = u \cat \eta$.
\end{proof}

To see that this finishes the proof of the main theorem, 
suppose that $\Omega$ is an $S$-word which evaluates to the identity function in $G$.
By Lemma \ref{stdform}, $\Omega \Rightarrow \Omega'$ for some word $\Omega'$ in standard form. 
By Lemma \ref{expanded}, $\Omega' \Rightarrow \Omega''$ for some word $\Omega''$
which is in standard form and which is sufficiently expanded. 
In particular, $\Omega''$ is equivalent to $\Omega$ by the relations in $R$;
if $\Omega$ was an $S_0$-word, then $\Omega''$ is an $S_0$ word 
and the derivation $\Omega \Rightarrow \Omega' \Rightarrow \Omega''$ 
utilizes only relations in $R_0$. 
By Lemma \ref{evaluation}, $\Omega''$ is an $X$-word. 
Since $R$ includes a presentation for $F$, $\Omega''$ 
can be reduced to the identity using the relations in $R_0$.

\end{document}